\documentclass{amsart}
\usepackage{amssymb,mathrsfs,MnSymbol}
\usepackage{color}


\def\bQ {\mathbf{Q}}
\def\bR {\mathbf{R}}
\def\bS {\mathbf{S}}

\def\bZ {\mathbf{Z}}

\def\cD {\mathcal{D}}

\def\cH {\mathcal{H}}

\def\cM {\mathcal{M}}
\def\cN {\mathcal{N}}

\def\cQ {\mathcal{Q}}

\def\a {{\alpha}}
\def\b {{\beta}}
\def\g {{\gamma}}
\def\Ga {{\Gamma}}
\def\de {{\delta}}
\def\eps {{\epsilon}}
\def\th {{\theta}}
\def\io {{\iota}}
\def\ka {{\kappa}}
\def\l {{\lambda}}
\def\L {{\Lambda}}

\def\d {{\partial}}
\def\grad {{\nabla}}
\def\Dlt {{\Delta}}

\def\rstr {{\big |}}
\def\indc {{\bf 1}}

\def\wto {{\rightharpoonup}}


\newcommand{\Div}{\operatorname{div}}

\newcommand{\Det}{\operatorname{det}}
\newcommand{\Tr}{\operatorname{trace}}

\newcommand{\Esssup}{\mathop{\mathrm{ess\,sup}}}

\def\wto {{\rightharpoonup}}

\newcommand{\ba}{\begin{aligned}}
\newcommand{\ea}{\end{aligned}}

\newcommand{\be}{\begin{equation}}
\newcommand{\ee}{\end{equation}}

\newcommand{\lb}{\label}

\newtheorem{Thm}{Theorem}[section]

\newtheorem{Prop}[Thm]{Proposition}

\newtheorem{Lem}[Thm]{Lemma}
\newtheorem{Def}[Thm]{Definition}



\begin{document}

\title[Partial Regularity for Landau Equation]{Partial Regularity in Time\\ for the Space Homogeneous Landau Equation\\ with Coulomb Potential}

\author[F. Golse]{Fran\c cois Golse}
\address[F.G.]{CMLS, \'Ecole polytechnique, 91128 Palaiseau Cedex, France}
\email{francois.golse@polytechnique.edu}

\author[M. P. Gualdani]{Maria Pia Gualdani}
\address[M.P.G.]{Department of Mathematics, George Washington University, 801 22nd St. NW, Washington DC, 20052, USA}
\email{gualdani@gwu.edu}

\author[C. Imbert]{Cyril Imbert}
\address[C.I.]{CNRS \& DMA, \'Ecole normale sup\'erieure, 75230 Paris Cedex 05, France}
\email{cyril.imbert@ens.fr}

\author[A. Vasseur]{Alexis Vasseur}
\address[A.V.]{Department of Mathematics \& ICES, University of Texas at Austin, 1 University Station C1200, Austin, TX, 78712-0257, USA}
\email{vasseur@math.utexas.edu}

\begin{abstract}
We prove that the set of singular times for weak solutions of the space homogeneous Landau equation with Coulomb potential constructed as in [C. Villani, Arch. Rational Mech. Anal. \textbf{143} (1998), 273--307] has Hausdorff dimension 
at most $\tfrac12$.
\end{abstract}

\keywords{}

\subjclass{}

\date{\today}

\maketitle


\section{Introduction}


We are concerned with the regularity of weak solutions $f\equiv f(t,v)\ge 0$ a.e. to the space homogeneous Landau equation with Coulomb interaction potential
\be\lb{LC}
\d_tf(t,v)=\Div_v\int_{\bR^3}a(v-w)(f(t,w)\grad_vf(t,v)-f(t,v)\grad_wf(t,w))dw\,,\quad v\in\bR^3\,,
\ee
where the collision kernel $a$ is the matrix field
$$
a(z)=\grad^2|z|=\frac{\Pi(z)}{|z|}\,,\quad\text{ with }\Pi(z):=I-\left(\frac{z}{|z|}\right)^{\otimes 2}\,.
$$
(In other words, $\Pi(z)$ is the orthogonal projection on $(\bR z)^\perp$ for all $z\in\bR^3\setminus\{0\}$.) This equation is used in the description of collisions between charged particles in plasma physics (see \cite{Landau36} or \S 41 in 
\cite{LL10}).

Equivalently, the Landau equation with Coulomb potential takes the form
\be\lb{LCncf}
\d_tf(t,v)=\Tr(A[f](t,v)\grad^2_vf(t,v))+8\pi f(t,v)^2
\ee
with $A[f](t,\cdot):=a\star f(t,\cdot)$.

Villani has proved the global existence of a special kind of weak solutions of the Cauchy problem for \eqref{LC}, known as ``H-solutions'', for all initial data with finite mass, energy and entropy (Theorem 4 (i) in \cite{VillaniH}). Whether H-solutions 
of \eqref{LC} with smooth initial data remain smooth for all times or blow up in finite time is one of the outstanding problems in the mathematical analysis of kinetic models: see \S 1.3 (2)  in chapter 5 of Villani's monograph \cite{VillaniHB}. The
form \eqref{LCncf} of the Landau equation suggests that blow-up might occur in finite time, by analogy with the semilinear heat equation $\d_tu(t,x)=\Delta_xu(t,x)+u(t,x)^2$: see the last statement in Theorem 1 of \cite{Weissler80} (for nonnegative 
solutions of the initial boundary value problem on a smooth bounded domain of $\bR^3$ with homogeneous Dirichlet condition at the boundary), or section 5.4 in \cite{Cazenave}. On the other hand, global existence of classical, radially symmetric 
and nonincresing (in the velocity variable) solutions has been established for the equation $\d_tu(t,x)=((-\Dlt_x)^{-1}u)(t,x)\Dlt_xu(t,x)+\a u(t,x)^2$ which can be seen as an ``isotropic'' variant of \eqref{LCncf} (i.e. with the diffusion matrix $A[f]$ replaced with the diffusion coefficient $4\pi((-\Dlt_v)^{-1}f)$ and the constant $8\pi$ replaced with the (smaller) coefficient $4\pi\a$), for all $\a\in[0,\tfrac{74}{74})$ in \cite{GressKriegerStrain,KriegerStrain}, then for $\a=1$ in \cite{GualdaniGuillen}. 

These arguments being somewhat inconclusive, the recent research on the Cauchy problem for \eqref{LC} has produced mostly conditional results --- with one notable exception, which we shall discuss in more detail below. For instance, the 
uniqueness of bounded solutions of \eqref{LC} has been proved in \cite{Fournier}; the regularity of radial $L^p$ solutions with $p>\tfrac32$ has been proved in \cite{GualdaniGuillen} (see also \cite{GualdaniGuillen2}); the case of nonradial $L^p$ 
solutions with $p>\tfrac32$ and moments in $v$ of order larger than $8$ is treated in \cite{Silvestre}, while the large time behavior of H-solutions of \eqref{LC} is discussed in \cite{CarraDesviHe}. Of course, the existence and uniqueness theory
for the space inhomogeneous Landau equation is even harder, and most of the existing results on that equation bear on near-Maxwellian equilibrium global solutions \cite{Guo,CarraMischler,ChenDesviHe}, apart from the very general weak stability 
result in \cite{Lions}. There are also various local existence and uniqueness results, as well as smoothing estimates for solutions of the space inhomogeneous Landau equation under the assumption of locally (in time and space) bounded moments 
in $v$: see \cite{HeYang,HenderSnelsonTarfu,HenderSnelson} (notice that \cite{HenderSnelsonTarfu,HenderSnelson} require only that the distribution function has bounded moments in $v$ of order larger than $9/2$). Since the present paper is 
focused on the Coulomb case, which is the most interesting on physical grounds, we have omitted the rather large literature on the generalizations of the Landau equation where the collision kernel $a$ is replaced with $|z|^{\g+2}\Pi(z)$ with 
$\g>-3$.

Perhaps the most remarkable recent contribution to the mathematical theory of \eqref{LC} is the following result by Desvillettes \cite{DesviJFA}: any (nonnegative) H-solution of \eqref{LC} on $[0,T]\times\bR^3$ with finite mass, energy and entropy 
satisfies the bound
\be\lb{Desvi<}
\int_0^T\int_{\bR^3}\frac{|\grad_v\sqrt{f(t,v)}|}{(1+|v|)^3}dvdt<\infty\,;
\ee
(see Theorem 1 in \cite{DesviJFA}).This bound implies in particular the propagation of moments in $v$ of arbitrary order for such solutions of \eqref{LC} (see Proposition 2 in \cite{DesviJFA}). Both this bound and the propagation of moments are
of key importance in the present work.

Our main result is the following partial regularity statement, which will be presented and discussed in detail in the next section.

\smallskip
\noindent
\textbf{Main Theorem.} \textit{The set of singular times of any H-solution of \eqref{LC} constructed by the approximation scheme described in \cite{VillaniH} has Hausdorff dimension at most $\tfrac12$.}


\section{Main Results}\lb{S-MR}


The prototype of all partial regularity results in partial differential equations is Leray's observation \cite{LerayNS} that the set of singular times of any Leray solution to the Navier-Stokes equations for incompressible fluid dynamics in three space
dimensions has Hausdorff dimension at most $\tfrac12$ (see \S 34 in \cite{LerayNS}, especially, formula (6.5)). Leray's remark was later considerably refined by Scheffer \cite{Scheffer}, and by Caffarelli, Kohn and Nirenberg \cite{CKN1982} 
(see also \cite{Struwe,FLin,VasseurNS})

One key ingredient in Leray's observation is the energy inequality satisfied by Leray solutions to the Navier-Stokes equations. Our first task is therefore to establish an analogous inequality for solutions to \eqref{LC}. Henceforth we denote by
$L^p_k(\bR^3)$ the set of measurable $g\equiv g(v)$ defined a.e. on $\bR^3$ such that
$$
\|g\|_{L^p_k(\bR^3)}:=\left(\int_{\bR^3}(1+|v|^2)^{k/2}|g(v)|^pdv\right)^{1/p}<\infty\,.
$$

We first recall that an H-solution to \eqref{LC} on the time interval $[0,T)$ with initial data $f_{in}\equiv f_{in}(v)\ge 0$ a.e. is an element $f\in C([0,T);\cD'(\bR^3))\cap L^1((0,T);L^1_{-1}(\bR^3))$ such that
\be\lb{ConsLaw}
f(t,v)\ge 0\text{ for a.e. }v\in\bR^3\,,\quad\text{ and }\quad\int_{\bR^3}\left(\begin{matrix}1\\ v\\|v|^2\end{matrix}\right)f(t,v)dv=\int_{\bR^3}\left(\begin{matrix}1\\ v\\|v|^2\end{matrix}\right)f_{in}(t,v)dv
\ee
while
\be\lb{H}
\int_{\bR^3}f(t,v)\ln f(t,v)dv\le\int_{\bR^3}f_{in}(v)\ln f_{in}(v)dv
\ee
for a.e. $t\ge 0$, and
$$
\ba
\int_{\bR^3}f_{in}(v)\phi(0,v)dv+\int_0^T\int_{\bR^3}f(t,v)\d_t\phi(t,v)dv
\\
=\!\!\int_0^T\!\!\int_{\bR^3}\sqrt{\tfrac{f(t,v)f(t,w)}{|v-w|}}(\grad\phi(t,v)\!-\!\grad\phi(t,w))\cdot\Pi(v\!-\!w)(\grad_v\!-\!\grad_w)\sqrt{\tfrac{f(t,v)f(t,w)}{|v-w|}}dvdw
\ea
$$
for each $\phi\in C^1_c([0,T)\times\bR^3)$. Of course, the notion of H-solution is based on the observation that classical solutions of the Landau equation with appropriate decay as $|v|\to+\infty$ satisfy
\be\lb{HThm}
\ba
\frac{d}{dt}H(f)(t)
\\
=-\tfrac12\iint_{\bR^3\times\bR3}\frac{f(t,v)f(t,w)}{|v-w|}\Pi(v-w):(\grad_v\ln f(t,v)-\grad_w\ln f(t,w))^{\otimes 2}dvdw\le 0&\,,
\ea
\ee
where
$$
H(f)(t):=\int_{\bR^3}f(t,v)\ln f(t,v)dv\,.
$$
The notation $H(f)$ to designate this quantity comes from the ``Boltzmann H Theorem'', which is the analogous differential inequality for the Boltzmann equation in the kinetic theory of gases. The positivity of the entropy production 
$-\tfrac{d}{dt}H(f)(t)$ comes from the symmetries in the Landau collision integral (i.e. the right hand side of \eqref{LC}), which are hidden in the nonconservative parabolic form \eqref{LCncf}.

\begin{Def}
A suitable solution of \eqref{LC} on $[0,T)\times\bR^3$ is an H-solution which satisfies, for some negligible set $\cN\subset(0,T)$, some $q\in(1,2)$ and some $C_E>0$, the truncated entropy inequality
\be\lb{SuitSolIneq}
\ba
H_+(f(t_2,\cdot)|\ka)+C'_E\int_{t_1}^{t_2}\left(\int_{\bR^3}|\grad_v(f(t,v)^{1/q}-\ka^{1/q})_+|^qdv\right)^{2/q}dt
\\
\le H_+(f(t_1,\cdot)|\ka)+2\ka\int_{t_1}^{t_2}\int_{\bR^3}(f(t,v)-\ka)_+dvdt
\ea
\ee
for all $t_1<t_2\in[0,T)\setminus\cN$ and all $\ka\ge 1$. For each measurable $g\equiv g(v)\ge 0$ a.e. on $\bR^3$, we denote
$$
H_+(g|\ka):=\int_{\bR^3}\ka h_+\left(\frac{g(v)}{\ka}\right)dv\,,\quad\text{ with }\quad h_+(z):=z(\ln z)_+-(z-1)_+\,.
$$
\end{Def}

The term ``suitable solution'' is used here by analogy with the notion of suitable weak solutions of the Navier-Stokes equations defined in \cite{CKN1982}, which satisfy a local (in space) variant of the Leray energy inequality. Indeed, one can
think of \eqref{SuitSolIneq} as a variant of the entropy inequality for \eqref{LC} localized in the set of values of the solution. 

Our first result is that the approximation scheme used in \cite{VillaniH} to construct an H-solution of the Cauchy problem for \eqref{LC} converges to a suitable solution of \eqref{LC}.

\begin{Prop}\lb{P-ExistSSol}
Let $f_{in}\in L^1(\bR^3)$ satisfy $f_{in}\ge 0$ a.e. on $\bR^3$ and
\be\lb{Momfin}
\int_{\bR^3}(1+|v|^k+|\ln f_{in}(v)|)f_{in}(v)dv<\infty\quad\text{ for some }k>3\,.
\ee
Then, there exists an H-solution of \eqref{LC} with initial data $f_{in}$ and a negligible set $\cN\subset\bR^*_+$ such that $f$ is a suitable solution of \eqref{LC} for each $T\ge 0$ satisfying \eqref{SuitSolIneq} with $q:=\frac{2k}{k+3}$ and 
$C'_E\equiv C'_E[T,q,f_{in}]>0$ for all $t_1<t_2\in[0,T)\setminus\cN$ and all $\ka\ge 1$.
\end{Prop}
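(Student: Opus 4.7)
\emph{Overall plan.} The strategy is to run a De Giorgi/Vasseur--type truncation argument at the level of the smooth approximants $f_\eps$ from \cite{VillaniH}, derive \eqref{SuitSolIneq} with $\eps$-uniform constants, and then pass to the limit $\eps\to 0$. Three ingredients drive the proof: the classical coercivity of the diffusion matrix $A[f]$ in terms of the conserved quantities; the propagation of $v$-moments of arbitrary order (Proposition~2 in \cite{DesviJFA}); and the identity $\sum_{ij}\d_i\d_j a_{ij}=-8\pi\delta_0$ (a consequence of $\Dlt^2|z|=-8\pi\delta_0$), which ties the drift of the Landau operator to the semilinear source $8\pi f^2$ in \eqref{LCncf}.

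\emph{Truncated entropy identity at the $\eps$-level.} Write the regularized Landau equation in divergence form $\d_tf_\eps=\Div_v(A_\eps[f_\eps]\grad f_\eps)-\Div_v(f_\eps\bbb_\eps[f_\eps])$ with $\bbb_\eps[f_\eps]_j=\sum_i\d_i(A_\eps[f_\eps])_{ij}$. Multiplying by $h_+'(f_\eps/\ka)=(\ln(f_\eps/\ka))_+$ (which vanishes on $\{f_\eps\le\ka\}$), integrating on $[t_1,t_2]\times\bR^3$ and using $\grad(\ln(f_\eps/\ka))_+=\indc_{\{f_\eps>\ka\}}\grad f_\eps/f_\eps$, the diffusive part yields the coercive dissipation $\int_{\{f_\eps>\ka\}}\frac{\grad f_\eps\cdot A_\eps[f_\eps]\grad f_\eps}{f_\eps}\dd v$. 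The drift part equals $-\int(f_\eps-\ka)_+\Div\bbb_\eps[f_\eps]\dd v$ which, thanks to the $\eps$-approximation of $\Div\bbb=-8\pi f$, reduces to $\sim 8\pi\int f_\eps(f_\eps-\ka)_+\dd v$ modulo vanishing errors; splitting $f(f-\ka)_+=\ka(f-\ka)_++(f-\ka)_+^2$ isolates a linear source $\lesssim\ka\int(f_\eps-\ka)_+\dd v$ and a quadratic remainder.

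\emph{Ellipticity, $L^q$-gradients, absorption.} The standard lower bound $A[f_\eps](v)\xi\cdot\xi\ge c_\star(1+|v|)^{-3}|\xi|^2$, with $c_\star>0$ depending only on the (uniformly bounded) mass, energy and entropy of $f_\eps$, gives $\frac{\grad f_\eps\cdot A_\eps[f_\eps]\grad f_\eps}{f_\eps}\ge c_\star\frac{|\grad f_\eps|^2}{f_\eps(1+|v|)^3}$. Setting $q=\tfrac{2k}{k+3}\in(1,2)$ and applying Hölder via the factorization
$$
\frac{|\grad f_\eps|^q}{f_\eps^{q-1}}=\left(\frac{|\grad f_\eps|^2}{f_\eps(1+|v|)^3}\right)^{\!q/2}(1+|v|)^{3q/2}f_\eps^{1-q/2}
$$
converts the weighted $L^2$-dissipation into $\|\grad(f_\eps^{1/q}-\ka^{1/q})_+\|_{L^q}^2$, with $3q/(2-q)=k$, so that the Hölder weight $\big(\int(1+|v|)^kf_\eps\dd v\big)^{(2-q)/q}$ is controlled by the propagated $k$-moment. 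The quadratic remainder $\int(f_\eps-\ka)_+^2\dd v$ is absorbed by a Sobolev/Gagliardo--Nirenberg embedding applied to $(f_\eps^{1/q}-\ka^{1/q})_+$, combined with the pointwise inequality $(f^{1/q}-\ka^{1/q})_+^q\le(f-\ka)_+$ (subadditivity of $t\mapsto t^{1/q}$) and smallness of the super-level set for $\ka\ge 1$ via Markov.

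\emph{Passage to the limit, and main obstacle.} Villani's scheme provides $f_\eps\to f$ strongly in $L^1_{\mathrm{loc}}((0,T)\times\bR^3)$ and a.e.\ along a subsequence outside a negligible time-set $\cN$. Convexity of $h_+$ yields lower semicontinuity of $H_+(\cdot|\ka)$; convexity of $g\mapsto(\int|\grad g|^q\dd v)^{2/q}$ (since $2/q>1$) yields that of the dissipation; and the source $\int(f_\eps-\ka)_+$ passes by dominated convergence. The \textbf{main obstacle} will be to keep the whole chain quantitatively $\eps$-uniform, depending only on $T$, $q$, and on the mass, energy, entropy, and $k$-moment of $f_{in}$: the $k$-moment propagation must be traced through the Villani approximation; the Sobolev/interpolation absorption of the quadratic remainder must produce a strictly positive final constant $C'_E$; and the two integrations by parts relying on $\sum\d_i\d_j a_{ij}=-8\pi\delta_0$ must be carried out rigorously at the $\eps$-level, with the resulting $O(\eps)$-errors controlled by the uniform bounds and disappearing in the limit.
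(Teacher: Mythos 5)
There is a genuine gap, and it sits at the heart of your derivation of the truncated entropy inequality. You work with the conservative form $\d_tf=\Div(A[f]\grad f)-\Div(f\,\bbb[f])$ and compute the drift contribution as $-\int(f-\ka)_+\Div\bbb\,dv=8\pi\int f(f-\ka)_+dv$, which you then split into $8\pi\ka\int(f-\ka)_+dv$ plus the quadratic remainder $8\pi\int(f-\ka)_+^2dv$, to be ``absorbed'' by Sobolev embedding applied to $(f^{1/q}-\ka^{1/q})_+$. This absorption cannot work. The available a priori information is $(f-\ka)_+\in L^\infty_tL^1_v$ together with the dissipation, which by Sobolev controls $(f^{1/q}-\ka^{1/q})_+$ in $L^2_tL^{q^*}_v$, i.e.\ $(f-\ka)_+$ in $L^{2/q}_tL^{3/(3-q)}_v$ up to lower-order terms. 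Interpolating with $L^\infty_tL^1_v$ gives $(f-\ka)_+$ in $L^{4/3}_tL^2_v$ at best (even in the limiting case $q\to2$), which does not control $\|(f-\ka)_+\|_{L^2_tL^2_v}^2$; nor does Markov's inequality supply a small factor in the right norm. The quadratic reaction term is supercritical relative to the dissipation --- this is precisely the obstruction that makes global regularity for Landau--Coulomb open --- so no choice of constants rescues the argument, and you do not obtain \eqref{SuitSolIneq}, whose right-hand side carries only the linear-in-$f$ term $2\ka\int(f-\ka)_+$.

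The paper's proof avoids the quadratic term altogether by never leaving the symmetric double-integral form of the collision operator. Testing the regularized equation (with kernel $a_n=\psi_n\Pi$, $\psi_n=\tfrac1{8\pi}\min(|z|^{-1},n)$) against $(\ln(f^n/\ka))_+$ produces a nonnegative symmetric quadratic form in $\grad_v(\ln(f^n(v)/\ka))_+-\grad_w(\ln(f^n(w)/\ka))_+$ plus a single cross term pairing $\grad_v(\ln(f^n(v)/\ka))_+$ with $\grad_w(\ln(f^n(w)/\ka))_-$. Since $f^n\grad_w(\ln(f^n/\ka))_-=\grad_w\min(f^n,\ka)$ is the gradient of a function bounded by $\ka$, a double integration by parts throws both gradients onto $a_n$, and $\Div(\Div a_n)\ge0$ is a unit-mass measure, yielding the depleted bound $\ka\int(f^n-\ka)_+dv$ --- this is exactly remark (a) following the statement of the Proposition, and the authors explicitly warn that the conservative form \eqref{LCcf} hides this cancellation. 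Your other ingredients (the Hölder factorization trading the weight $(1+|v|)^{3q/(2-q)}$ against the propagated $k$-moment with $q=2k/(k+3)$, and the use of Desvillettes' moment propagation) match the paper's Steps 3 and 5, although the paper must prove a level-set-localized coercivity lemma (Lemma \ref{L-CoercH+}) to extract the weighted Fisher information from the \emph{symmetric} dissipation, whereas your pointwise ellipticity bound would suffice only in the conservative-form setting. A secondary issue: in the limit passage, lower semicontinuity of $H_+(\cdot|\ka)$ handles only the term $H_+(f(t_2,\cdot)|\ka)$ on the favorable side; for $H_+(f(t_1,\cdot)|\ka)$ on the right-hand side you need genuine convergence for a.e.\ $t_1$, which the paper obtains from a.e.\ convergence plus equiintegrability and a diagonal extraction over rational $\ka$.
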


\smallskip
The proof of this proposition is based on the analysis in \cite{VillaniH}, with the following additional observations

\noindent
(a) the truncated entropy $-H_+(f|\ka)$ is not a increasing function of time as the original entropy $-H(f)(t)$, but combining the symmetries of the collision integral and the fact that $\Div(\Div a)=\Dlt^2|z|=-4\pi\de_0$ shows that the negative part
of the truncated entropy production involves the depleted nonlinearity $\ka(f-\ka)_+=\min(f,\ka)(f-\ka)_+$ instead of the full nonlinearity $f^2(\ln(f/\ka))_+$ which the nonconservative form \eqref{LCncf} seems to suggest;

\noindent
(b) the Desvillettes argument leading to the inequality \eqref{Desvi<} can be modified to handle the positive part  of the truncated entropy production, and

\noindent
(c) the Desvillettes propagation of moments argument can be used to purge the positive part of the truncated entropy production from the weight $(1+|v|)^{-3}$ at the expense of introducing the exponent $q<2$ in the inequality \eqref{SuitSolIneq}.

The notion of relative entropy of $f$ to a Maxwellian $\cM_{\rho,u,\th}$
$$
-\int_{\bR^3}\left(f(v)\ln\left(\frac{f(v)}{\cM_{\rho,u,\th}(v)}\right)-f(v)+\cM_{\rho,u,\th}(v)\right)dv
$$
with
$$
\cM_{\rho,u,\th}(v):=\frac{\rho}{(2\pi\th)^{3/2}}e^{-|v-u|^2/2\th}
$$
for some $\rho,\th>0$ and $u\in\bR^3$ is used traditionally in kinetic theory to measure the distance of $f$ to $\cM_{\rho,u,\th}$ (see for instance section 3 in \cite{BGL2}). The term $-H_+(f/\ka)$ can be thought of as the truncated variant of
the relative entropy $-H(f|\cM_{(2\pi\th)^{3/2}\ka,0,\th})$ in the large temperature limit $\th\to+\infty$; its time derivative benefits therefore from the same cancellations which lead to the nonnegative entropy production term in \eqref{HThm},
up to the lower order perturbation term due to the truncation, which lead ultimately to the depleted nonlinearity on the right hand side of \eqref{SuitSolIneq}. The truncated entropy $-H(f|\ka)$ is therefore the best imaginable tool for applying
the Stampacchia truncation method to the Landau equation \eqref{LC} without loosing the symmetries of the Landau collision integral. This is precisely the reason why the truncated entropy has been introduced in \cite{GoudonVasseur}
to handle a class of reaction-diffusion systems which is very close to a  kinetic equation (in the discrete velocity setting). In particular, the inequality \eqref{SuitSolIneq} is similar to Lemma 3.1 in \cite{GoudonVasseur} --- notice however the
Remark 3.2 in \cite{GoudonVasseur} which states that extensions of these tools outside of the discrete velocity setting is far from straightforward.

\smallskip
Next we study the occurence of blow-up times for suitable solutions of \eqref{LC}. The set of singular times for a suitable solution of \eqref{LC} is defined by analogy with \S33-34 in \cite{LerayNS}.

\begin{Def}
A real number $\tau>0$ is a regular time for an H-solution $f$ of the Landau equation \eqref{LC} on $[0,+\infty)\times\bR^3$ if there exists $\eps\in(0,\tau)$ such that $f\in L^\infty((\tau-\eps,\tau)\times\bR^3)$. A real number $\tau>0$ is a 
singular time for $f$ if it is not a regular time for $f$. The set of singular times for $f$ in the interval $I\subset(0,+\infty)$ is denoted by $\bS[f,I]$.
\end{Def}

\smallskip
Our main result is the following partial regularity statement on suitable solutions of the Landau equation \eqref{LC}.

\begin{Thm}\lb{T-PartReg}
Let $f$ be a suitable solution to the Landau equation on $[0,T)\times\bR^3$ for all $T>0$ , with initial data $f_{in}$ satisfying the condition
$$
\int_{\bR^3}(1+|v|^k+|\ln f_{in}(v)|)f_{in}(v)dv<\infty\quad\text{ for all }k>3\,.
$$
Then the set $\bS[f,(0,+\infty)]$ of singular positive times for $f$ satisfies
$$
\text{\rm Hausdorff dim }\bS[f,(0,+\infty)]\le\tfrac12\,.
$$
\end{Thm}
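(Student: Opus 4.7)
The plan is to adapt Leray's paradigm for the three-dimensional Navier--Stokes equations \cite{LerayNS}: combine (i) an $\varepsilon$-regularity criterion that rules out singular times whenever a suitable local dissipation is small, with (ii) the global-in-time integrability of that dissipation coming from the suitable-solution inequality \eqref{SuitSolIneq}, and conclude by a Vitali covering of $\bS[f,(0,+\infty)]$.

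First I would extract from \eqref{SuitSolIneq}, applied between $0$ and $T$ at a single fixed level $\kappa_0 \ge 1$, together with the Sobolev embedding $\|g\|_{L^{q^*}}^2 \le C_S \|\nabla g\|_{L^q}^2$ (with $q^* = 3q/(3-q)$) for $g = (f^{1/q} - \kappa_0^{1/q})_+$, the global bound
$$
\int_0^T \|(f^{1/q}(t,\cdot) - \kappa_0^{1/q})_+\|_{L^{q^*}(\bR^3)}^2\, dt < \infty,
$$
which, since $q^*/q > 3/2$ under the hypotheses, is an integrable-in-time supercritical control of $f$ above level $\kappa_0$ and is the correct quantity to place on the right-hand side of the later covering estimate.

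The heart of the argument is then an $\varepsilon$-regularity criterion: there exist $\eta_0 > 0$ and $\kappa_0 \ge 1$ (depending on $f_{in}$) such that whenever $\tau,\eps > 0$ satisfy
$$
H_+(f(\tau-\eps,\cdot)|\kappa_0) + \int_{\tau-\eps}^\tau \|\nabla(f^{1/q}-\kappa_0^{1/q})_+\|_{L^q(\bR^3)}^2\, dt \;<\; \eta_0\, \eps^{1/2},
$$
the interval $[\tau-\eps/2,\tau]$ contains no singular time of $f$. To prove this I would run a Stampacchia/De Giorgi iteration in the spirit of \cite{GoudonVasseur}: along the geometric sequences of levels $\kappa_n = \kappa_0(2-2^{-n})$ and of times $t_n = \tau - \eps/2 - \eps/2^{n+1}$, define
$$
U_n \;:=\; \sup_{t\in[t_n,\tau]} H_+(f(t,\cdot)|\kappa_n) + C'_E \int_{t_n}^\tau \|\nabla(f^{1/q}-\kappa_n^{1/q})_+\|_{L^q}^2\, dt,
$$
and combine \eqref{SuitSolIneq}, the Sobolev embedding, Chebyshev on $\{f>\kappa_{n+1}\}$ (to estimate the forcing $2\kappa_{n+1}\int(f-\kappa_{n+1})_+\, dv\, dt$ by a superlinear power of $U_n$), and the propagation of velocity moments to obtain a nonlinear recursion of the form $U_{n+1} \le C^n \eps^{-\beta} U_n^{1+\nu}$ for some $\beta,\nu>0$. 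Provided $U_0 < \eta_0\eps^{1/2}$ with $\eta_0$ small enough, $U_n \to 0$, which forces $f \le 2\kappa_0$ a.e. on $[\tau-\eps/2,\tau]\times\bR^3$.

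The conclusion is then standard: at every singular time $\tau$ the quantity in the $\varepsilon$-regularity criterion must be $\ge \eta_0\eps^{1/2}$ for all small $\eps$; a Vitali selection from the resulting covering $\{(\tau-\eps_\tau,\tau)\}_{\tau \in \bS}$ produces a disjoint subfamily whose $\sum_j \eps_j^{1/2}$ is bounded by $\eta_0^{-1}$ times the globally integrable dissipation of the first step, giving $\cH^{1/2}(\bS[f,(0,+\infty)])<\infty$, hence $\dim_H \bS[f,(0,+\infty)] \le \tfrac12$. The main obstacle is evidently the De Giorgi iteration of the second step: because $q \in (1,2)$ the Sobolev gain is weaker than the $L^2$-Sobolev used in the Navier--Stokes or Goudon--Vasseur settings, the linear-in-$\kappa$ forcing in \eqref{SuitSolIneq} is borderline, and one must simultaneously exploit the Desvillettes-style propagation of high $v$-moments \cite{DesviJFA} in order to trade the unfavourable tail decay for extra integrability and close the recursion. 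Pinning down exactly the scaling $\eps^{1/2}$ (and not a worse power), which is what fixes the final Hausdorff dimension, is the decisive technical point.
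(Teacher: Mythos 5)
Your overall strategy (an $\eps$-regularity criterion proved by a De Giorgi/Stampacchia iteration on the truncated entropy, followed by a Vitali covering fed by the global dissipation bound) is the one the paper follows, but the two points you yourself identify as ``decisive'' are genuine gaps, and in one case the claim you make is not what the method can deliver.

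First, the exponent. The scaling that leaves \eqref{SuitSolIneq} invariant is $f_\l(t,v)=\l f(\l t,\l^{1/\g}v)$ with $\g=\frac{5q-6}{2q-2}$, and under it the dissipation over a backward time interval of length $\eps^\g$ scales like $\eps^{3-\g}=\eps^{q/(2q-2)}$, i.e.\ like $\de^{q/(5q-6)}$ over an interval of length $\de$. Since $q<2$ is forced by the moment hypothesis (via $q=\frac{2k}{k+3}$), one has $q/(5q-6)>\tfrac12$ strictly: the exponent $\tfrac12$ corresponds to the limit case $q=2$, which is not attainable. So your criterion with threshold $\eta_0\,\eps^{1/2}$ cannot be proved for any admissible $q$, and your conclusion $\cH^{1/2}(\bS)<\infty$ is stronger than what the argument yields. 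The correct route is to prove the criterion with threshold $\eta_1\,\eps^{3-\g}$ over intervals of length $\eps^\g$, deduce $\cH^{q/(5q-6)}(\bS)<\infty$ for each $q\in(\tfrac43,2)$, and only then let $q\to2$ to conclude $\cH^s(\bS)=0$ for every $s>\tfrac12$, hence dimension $\le\tfrac12$.

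Second, the form of the $\eps$-regularity hypothesis. You include the time-slice entropy $H_+(f(\tau-\eps,\cdot)|\ka_0)$ in the smallness assumption, but this term is not summable over a disjoint Vitali subfamily: disjointness of the intervals $(\tau_j-\eps_j,\tau_j)$ controls $\sum_j\int_{\tau_j-\eps_j}^{\tau_j}D(t)\,dt$ by the global dissipation, not $\sum_j H_+(f(\tau_j-\eps_j,\cdot)|\ka_0)$. The criterion must therefore be stated in terms of the dissipation alone (as in Leray and in Proposition 2 of Caffarelli--Kohn--Nirenberg). Purging the entropy from the hypothesis is not a one-line averaging: it requires rescaling the solution by the subgroup above, running the truncated-entropy/Sobolev estimate across successive scales to convert smallness of the dissipation into smallness of $\Esssup_t\int F_n^q\,dv$ for the rescaled solutions, closing a two-level recursion of the type $X_{n+1}\le\rho(\max(1,X_n)^{q/3}+\max(1,X_{n-1})^{q/3})$ (this is where the restriction $q>\tfrac43$, i.e.\ $\g>1$, enters), and only then invoking the entropy-based De Giorgi lemma on the deepest scale. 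A related caveat: your iteration uses a fixed truncation level $\ka_0$ and concludes $f\le2\ka_0$ uniformly in $\eps$, whereas the correct scaled statement truncates at level $\eps^{-\g/q}$ and yields a bound on $f$ that blows up as $\eps\to0$ (which is harmless for excluding singular times, but is what the scaling actually produces). Your recursion $U_{n+1}\le C^n\eps^{-\b}U_n^{1+\nu}$ is plausible in form, but without computing $\b$ you cannot see that the closure condition is $U_0\lesssim\eps^{3-\g}$ rather than $\eps^{1/2}$; that computation is precisely the content you have deferred.
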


\smallskip
Our approach to Theorem \ref{T-PartReg} departs from the general method for proving partial regularity results outlined in section 1 of \cite{CKN1982}. First, we do not use any dimension analysis similar to formula (1.9) of \cite{CKN1982} 
on solutions of the Landau equation. One easily checks that if $f$ is a classical solution of \eqref{LC}, then 
\be\lb{ScalLC}
f_{\l,\mu}(t,v):=\l f(\l t,\mu v)
\ee
is a classical solution of \eqref{LC}. But although the Landau equation \eqref{LC} has a two-parameter family of invariant scaling transformations (richer than the single-para- meter family of invariant scaling transformations of the Navier-Stokes 
equations), the three conserved quantities in \eqref{ConsLaw} and the entropy $-H(f)(t)$ are not simultaneously preserved by any one of these transformations (except for $\l=\mu=1$). Within this family of scaling transformations, we retain
only the case 
\be\lb{ScalLCq}
\l=\mu^\g\quad\text{ with }\quad\g=\frac{5q-6}{2q-2}
\ee
which leaves the inequality \eqref{SuitSolIneq} invariant (upon transforming $\ka$ into $\l\ka$). Notice that this scaling transformation depends on the Lebesgue exponent $q$, which depends itself on the decay in $v$ of $f_{in}$ through
\eqref{Momfin}. The ``optimal'' scaling corresponds to the limit case $k\to\infty$, i.e. to $q\to 2$ and therefore to $\g\to 2$ in \eqref{ScalLCq}. This ``optimal'' scaling is exactly the same as the invariant scaling for the squared velocity field in the 
Navier-Stokes equations, which explains why the bound on the Hausdorff dimension of the set of singular times in both equations is the same. However, the ``optimal'' scaling $\g=2$ is only a limit case and cannot be used directly on \eqref{LC},
at variance with the case of the Navier-Stokes equations.

A first step towards partial regularity is to prove that any suitable solution of the Landau equation \eqref{LC} whose truncated entropy has small enough $L^1$ norm on some time interval is bounded on a smaller time interval.

\begin{Prop}\lb{P-DG} 
Let $f$ be a suitable solution to the Landau equation \eqref{LC} on $[0,1]$ satisfying \eqref{SuitSolIneq} for some negligible set $\cN\subset(0,1]$, some $q\in(\tfrac65,2)$, and some $C'_E>0$. There exists $\eta_0\equiv\eta_0[q,C'_E]>0$ 
such that 
$$
\int_{1/8}^1H_+(f(t,\cdot)|\tfrac12)dt<\eta_0\implies f(t,v)\le 2\quad\text{ for a.e. }(t,v)\in[\tfrac12,1]\times\bR^3\,.
$$
\end{Prop}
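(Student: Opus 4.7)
\smallskip
\noindent\textbf{Proof proposal.}
The statement is a small-energy-implies-smoothness result, which I would attack by a De Giorgi--Stampacchia iteration, treating the truncated entropy inequality \eqref{SuitSolIneq} as the substitute for the local energy inequality used in the partial regularity theory of the Navier--Stokes equations in \cite{CKN1982,VasseurNS}. Fix two nondecreasing sequences $\ka_k := 2 - \tfrac32\, 2^{-k}$ and $t_k := \tfrac12 - \tfrac38\, 2^{-k}$ for $k \geq 0$, so that $\ka_k \nearrow 2$, $t_k \nearrow \tfrac12$, $\ka_0 = \tfrac12$, $t_0 = \tfrac18$. Writing $g_k := (f^{1/q} - \ka_k^{1/q})_+$, I define the level energy
\[
U_k := \sup_{t \in [t_k, 1]} H_+(f(t,\cdot)\,|\,\ka_k) + \int_{t_k}^1 H_+(f(t,\cdot)\,|\,\ka_k)\, dt + C'_E \int_{t_k}^1 \|\grad_v g_k(t,\cdot)\|_{L^q(\bR^3)}^2\, dt\,,
\]
so that the hypothesis and one preliminary application of \eqref{SuitSolIneq} together force $U_0\leq C\eta_0$.

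For the inductive step $U_{k-1}\to U_k$, I combine three ingredients. First, the mean value theorem supplies $s_k \in [t_{k-1}, t_k]$ with $H_+(f(s_k)|\ka_k) \leq C\cdot 2^k \int_{t_{k-1}}^{t_k} H_+(f|\ka_k)\, dt$ (using also that $H_+(\cdot|\ka)$ is nonincreasing in $\ka$); applying \eqref{SuitSolIneq} from $s_k$ to each $t_2 \in [t_k, 1]$ with threshold $\ka_k$, and taking suprema and integrals on the left hand side, yields the preliminary bound
\[
U_k \leq C\cdot 2^k \int_{t_{k-1}}^{t_k} H_+(f|\ka_k)\, dt + C \int_{t_{k-1}}^1 \int_{\bR^3} (f - \ka_k)_+\, dv\, dt\,.
\]
Second, the De Giorgi level-set gain: on $\{f > \ka_k\}$ we have $(f-\ka_{k-1})_+ \geq \ka_k - \ka_{k-1} = \tfrac32\cdot 2^{-k}$, so for every $\sigma > 0$,
\[
\mathbf{1}_{\{f > \ka_k\}} \leq (C\cdot 2^k)^\sigma (f - \ka_{k-1})_+^\sigma\,.
\]
Combined with the elementary pointwise bound $H_+(f|\ka_k) \leq C(f - \ka_k)_+^2$ (from $h_+(z) \leq (z-1)^2$ for $z \geq 1$), both terms on the right hand side of the preliminary bound are absorbed into a constant multiple of $2^{k(1+\sigma)} \iint (f - \ka_{k-1})_+^{p(\sigma)}\, dv\, dt$ for appropriate powers $p(\sigma) > 1$. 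Third, the pointwise inequality $(f-\ka_{k-1})_+ \leq C(g_{k-1}^q + g_{k-1})$ on $\{f > \ka_{k-1}\}$ (from the mean value theorem applied to $s \mapsto s^q$), together with the Sobolev embedding $W^{1,q}(\bR^3) \hookrightarrow L^{q^*}(\bR^3)$ with $q^* = 3q/(3-q)$ --- whose exponent exceeds $2$ \emph{precisely} because of the restriction $q > \tfrac65$ --- give $\|g_{k-1}\|_{L^2((t_{k-1},1); L^{q^*}(\bR^3))}^2 \leq C\, U_{k-1}$. Combined with the uniform-in-time control $\sup_{t \in [t_{k-1},1]} \|(\sqrt f - \sqrt{\ka_{k-1}})_+\|_{L^2(\bR^3)}^2 \leq U_{k-1}$, which follows from the elementary lower bound $h_+(z) \geq (\sqrt z - 1)^2$ for $z \geq 1$, a Gagliardo--Nirenberg interpolation converts this into the spacetime estimate
\[
\iint (f - \ka_{k-1})_+^{p(\sigma_0)}\, dv\, dt \leq C\, U_{k-1}^{1 + \alpha}
\]
for a suitable $\sigma_0 > 0$ and some $\alpha > 0$ depending only on $q$.

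Assembling the three ingredients yields a De Giorgi recursion $U_k \leq C\cdot b^k \cdot U_{k-1}^{1 + \alpha}$ for some $b > 1$, and the standard iteration lemma then supplies a threshold $\eta_0 \equiv \eta_0[q, C'_E]$ such that $U_0 \leq \eta_0$ forces $U_k \to 0$; since $\ka_\infty = 2$, this in turn forces $f \leq 2$ almost everywhere on $[\tfrac12, 1] \times \bR^3$. The delicate step is the third ingredient: one must upgrade the $L^q$-gradient control of $g_{k-1}$ (together with the endpoint $L^\infty_t L^2_x$ bound on $(\sqrt f - \sqrt{\ka_{k-1}})_+$) into a spacetime $L^p$-integrability with $p$ strictly greater than what the level-set gain alone provides, in order to extract a genuine improvement exponent $\alpha > 0$. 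This is precisely where the restriction $q > \tfrac65$ (equivalently, the Sobolev gain $q^* > 2$) enters quantitatively, and it in turn governs the size of $\eta_0$.
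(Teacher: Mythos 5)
Your overall architecture coincides with the paper's: a De Giorgi iteration on levels $\ka_k\nearrow 2$ and times $t_k\nearrow\tfrac12$, with \eqref{SuitSolIneq} playing the role of the local energy inequality, the Chebyshev level-set gain for nonlinearization, and the Sobolev embedding $W^{1,q}\hookrightarrow L^{q^*}$ interpolated against the $L^\infty_t$ control coming from the truncated entropy --- with $q>\tfrac65$ entering exactly where you say it does. However, there is a genuine quantitative gap in your second ingredient. The bound $h_+(z)\le(z-1)_+^2$ forces you to control $\iint(f-\ka_{k-1})_+^{2+\si}\,dv\,dt$, and this quantity is \emph{not} dominated by any power of $U_{k-1}$. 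The only spacetime integrability available from your energy is obtained by interpolating $\|g_{k-1}\|_{L^\infty_t L^q_v}$ (which is what both $\sup_t H_+$ and your $L^\infty_tL^2_v$ bound on $(\sqrt f-\sqrt{\ka})_+$ reduce to on the set where $f$ is large) against $\|g_{k-1}\|_{L^2_tL^{q^*}_v}$; this yields $g_{k-1}\in L^{5q/3}_{t,v}$ and nothing better. Since $(f-\ka)_+\sim g^q$ where $f$ is large, the exponent $2+\si$ corresponds to $g^{q(2+\si)}$ with $q(2+\si)>2q>\tfrac{5q}{3}$, strictly beyond the interpolation ceiling. This is not a bookkeeping artifact: a single tall, thin spike of height $\l$ and mass $\l^{-a}$ with $\tfrac{q}{3-q}\le a<1$ (possible whenever $q<\tfrac32$) makes $\iint(f-\ka)_+^2\sim\l^{1-a}\to\infty$ while $\sup_tH_+$ and the dissipation both tend to $0$, so the claimed estimate $\iint(f-\ka_{k-1})_+^{p(\si_0)}\le CU_{k-1}^{1+\a}$ fails for $p(\si_0)=2+\si$.

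The repair is precisely what the paper does: use $h_+(z)\le C_\io(z-1)_+^{1+\io}$ with the \emph{small} exponent $\io=\tfrac23$, so that the term to be controlled is $\iint F^{q(1+\io)}=\iint F^{5q/3}$ with $F:=\mu((f^{1/q}-\ka^{1/q})_+)$, $\mu(r)=\min(r,r^2)$ --- exactly the exponent produced by the $L^\infty_tL^q_v$--$L^2_tL^{q^*}_v$ interpolation. The auxiliary function $\mu$ is needed so that $\|F(t)\|_{L^q}^q$ is bounded \emph{below} by a multiple of $H_+(f(t)|\ka)$ (your lower bound $h_+(z)\ge(\sqrt z-1)^2$ serves the same purpose but leaves the near-threshold and far ranges mismatched with the upper bound). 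The superlinearity of the recursion then comes not from raising the pointwise power of $h_+$ to $2$, but from the interpolation itself, which gives $\iint F_k^{5q/3}\lesssim A_k^{\b}$ with $\b=\tfrac83-\tfrac2q>1$ precisely when $q>\tfrac65$. With $\io=\tfrac23$ in place of $\io=1$, the rest of your scheme (choice of $s_k$ by averaging, level-set gain, two-sided comparison of $H_+$ with Lebesgue norms, initialization at $\ka=1$ from the hypothesis on $\int_{1/8}^1H_+(f|\tfrac12)\,dt$, and the standard superlinear iteration lemma) goes through as in the paper.
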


This proposition is proved by using the parabolic variant of the De Giorgi method, following the analysis in section 3 of \cite{GoudonVasseur}. However, the proof of this result diverges from the classical De Giorgi method in at least one important
step. While the key idea in De Giorgi's solution \cite{DeGiorgi} of Hilbert's 19\textsuperscript{th} problem is to consider the equations satisfied by the partial derivatives of the extremal as a linear elliptic equations with bounded coefficients, our 
proof of Proposition \ref{P-DG} is based on the truncated entropy inequality which makes critical use of the symmetries of the right hand side of \eqref{LC}. It seems very unclear that Proposition \ref{P-DG} could follow from applying the De Giorgi 
strategy to the conservative form \be\lb{LCcf}
\d_tf(t,v)=\Div_v(A[f](t,v)\grad_vf(t,v)-f(t,v)\Div_vA[f][t,v))
\ee
of the Landau equation without using cancellations suggested by the integral form \eqref{LC} of that equation. For the same reason, the De Giorgi method is used in the present paper in a way that differs from earlier applications of the same
method to other kinetic equations \cite{GIMV,ImbSilv}. Notice that the restriction $q>\tfrac65$ comes from the Sobolev embedding used in the nonlinearization procedure in the De Giorgi method.

\smallskip
The key result leading to partial regularity is the following proposition involving only the dissipation term in \eqref{SuitSolIneq}, by analogy with the argument in \S 34 of \cite{LerayNS}, or Proposition 2 of \cite{CKN1982} for the Navier-Stokes
equations.

\begin{Prop}\lb{P-ImprovedDG}
Let $f$ be a suitable solution to the Landau equation \eqref{LC} on $[0,1]$ satisfying \eqref{SuitSolIneq} for some negligible set $\cN\subset[0,1]$, some $q\in(\tfrac43,2)$, and some $C'_E>0$. There exists $\eta_1\equiv\eta_1[q,C'_E]>0$ and 
$\de_1\in(0,1)$ such that 
$$
\ba
\varlimsup_{\eps\to 0^+}\eps^{\g-3}\int_{1-\eps^\g}^1\left(\int_{\bR^3}|\grad_V(f(T,V)^{1/q}-\eps^{-\g/q})_+|^qdV\right)^{2/q}dT<\eta_1
\\
\implies f\in L^\infty((1-\de_1,1)\times\bR^3)&\,,
\ea
$$
with $\g:=\frac{5q-6}{2q-2}$.
\end{Prop}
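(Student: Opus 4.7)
The plan is to reduce Proposition~\ref{P-ImprovedDG} to Proposition~\ref{P-DG} by a zoom-in rescaling at time $t=1$ on the scale $\eps^\g$. Set
\[
F_\eps(T,V):=\eps^\g f(1-\eps^\g(1-T),\eps V),\qquad(T,V)\in[0,1]\times\bR^3.
\]
The choice $\l=\eps^\g$, $\mu=\eps$ verifies $\l=\mu^\g$, which by \eqref{ScalLC}--\eqref{ScalLCq} is precisely the scaling preserving \eqref{SuitSolIneq} (with threshold $\ka$ transformed into $\eps^{-\g}\ka$). Consequently $F_\eps$ is itself a suitable solution of \eqref{LC} on $[0,1]\times\bR^3$ satisfying \eqref{SuitSolIneq} with the same $q$ and the same $C'_E$.

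The associated scaling computation for the dissipation term yields
\[
\int_0^1\Big(\!\int_{\bR^3}|\grad_V(F_\eps^{1/q}-1)_+|^q dV\Big)^{2/q}dT=\eps^{\g-3}\!\int_{1-\eps^\g}^1\!\Big(\!\int_{\bR^3}|\grad_v(f^{1/q}-\eps^{-\g/q})_+|^q dv\Big)^{2/q}dt,
\]
the exponent $\g-3=-q/(2q-2)$ being the one prescribed by $\l=\mu^\g$. Hence the $\varlimsup$ assumption forces, for all sufficiently small $\eps>0$,
\[
\int_0^1\Big(\int_{\bR^3}|\grad_V(F_\eps^{1/q}-1)_+|^q dV\Big)^{2/q}dT<\eta_1.
\]

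The crux is to upgrade this dissipation bound into the hypothesis of Proposition~\ref{P-DG} applied to $F_\eps$, namely $\int_{1/8}^1 H_+(F_\eps(T,\cdot)|\tfrac12)dT<\eta_0$. The Sobolev embedding $\dot W^{1,q}(\bR^3)\hookrightarrow L^{q^*}(\bR^3)$ with $q^*:=3q/(3-q)$ (admissible since $q\in(\tfrac43,2)\subset(1,3)$) turns the bound above into an $L^2_TL^{q^*}_V$ control of $(F_\eps^{1/q}-1)_+$ of order $\sqrt{\eta_1}$. The subtlety is that the algebraic bound $h_+(z)\le C(z^{1/q}-1)_+^{q^*}$ holds only for $z\ge 2$ (using $q^*/q>1$), whereas on the intermediate region $\{1\le F_\eps<2\}$ one has instead the quadratic behavior $h_+(z)\le C(z-1)_+^2$; bridging these two regimes calls for an interpolation that makes essential use of $q^*>2$ (equivalent to $q>\tfrac65$, and implied by $q>\tfrac43$), together with the depleted-nonlinearity feature of \eqref{SuitSolIneq}. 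I would therefore select by pigeonhole a time $T_0\in(0,\tfrac1{16})$ at which $\|(F_\eps(T_0,\cdot)^{1/q}-1)_+\|_{L^{q^*}}$ is as small as desired, then propagate forward on $[T_0,T]$ via \eqref{SuitSolIneq} at threshold $\tfrac12$, whose right-hand side only carries the depleted nonlinearity $\tfrac12(F_\eps-\tfrac12)_+$ and can therefore be absorbed, region by region, into the $L^2_TL^{q^*}_V$ control. This propagation step is the main technical difficulty of the argument, and is exactly where the stronger condition $q>\tfrac43$ (rather than $q>\tfrac65$ in Proposition~\ref{P-DG}) is consumed.

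Once the $H_+$ bound is established, Proposition~\ref{P-DG} applied to $F_\eps$ gives $F_\eps\le2$ a.e.\ on $[\tfrac12,1]\times\bR^3$. Unscaling back to $f$ reads $f(t,v)\le2\eps^{-\g}$ a.e.\ on $[1-\tfrac12\eps^\g,1]\times\bR^3$. Fixing any $\eps>0$ small enough for the $\varlimsup$ hypothesis to apply at that scale, and setting $\de_1:=\tfrac12\eps^\g$, then yields $f\in L^\infty((1-\de_1,1)\times\bR^3)$, as required.
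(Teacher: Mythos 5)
Your scaling step is correct and matches the paper's: the zoom $F_\eps(T,V)=\eps^\g f(1-\eps^\g(1-T),\eps V)$ with $\g=\frac{5q-6}{2q-2}$ produces a suitable solution with the same $q$ and $C'_E$, and the dissipation identity with the factor $\eps^{\g-3}$ is the right one. The gap is in the step you yourself flag as ``the main technical difficulty'': converting the smallness of $\int_0^1\bigl(\int|\grad_V(F_\eps^{1/q}-1)_+|^q dV\bigr)^{2/q}dT$ into the hypothesis $\int_{1/8}^1H_+(F_\eps(T,\cdot)|\tfrac12)\,dT<\eta_0$ of Proposition~\ref{P-DG}. This cannot be done at a single scale. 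Indeed $\int_{\bR^3}F_\eps(T,V)\,dV=\eps^{\g-3}\|f_{in}\|_{L^1}\to+\infty$, and since $h_+(2z)\ge h_+(2)>0$ for $z\ge1$, one has $H_+(F_\eps(T,\cdot)|\tfrac12)\ge\tfrac12h_+(2)\,|\{F_\eps(T,\cdot)>1\}|$; the measure of this moderate super-level set is a priori only bounded by $\eps^{\g-3}\|f_{in}\|_{L^1}$ and is invisible to the Sobolev control of $(F_\eps^{1/q}-1)_+$, which can be arbitrarily small on $\{1<F_\eps<2\}$. Your pigeonhole-plus-propagation scheme runs into the same wall: propagating \eqref{SuitSolIneq} forward from $T_0$ requires $H_+(F_\eps(T_0,\cdot)|\ka)$ itself to be small at the initial time, which is exactly the quantity you cannot extract from the dissipation alone.

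The paper's proof resolves this with a genuinely multi-scale argument that your single-$\eps$ reduction omits. It introduces the whole dyadic family $f_n$ at scales $\eps_n=2^{-n}$, sets $X_n=\Esssup_{1/2<t<1}\int F_n^q\,dv$, and proves the two-level recurrence \eqref{IterDGImpr}, $X_{n+1}\le D\eta_1^{q/2}(\max(1,X_n)^{q/3}+\max(1,X_{n-1})^{q/3})$. The engine of that recurrence is a De Giorgi-type nonlinearization \emph{across scales}: $F_{n+1}(t,v)>0$ forces $F_n(1+2^{-\g}(t-1),\tfrac12v)^q>\mu(2^{\g/q}-1)^q$, so the troublesome indicator $\indc_{F_{n+1}>0}$ is dominated by $F_n^q$ at the previous scale. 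Starting from the enormous initial bound $X_0\le 2^{N(3-\g)}$, the iteration (with $\a=q/3<1$ and prefactor $\rho=D\eta_1^{q/2}$ small) drives $X_n$ below $1$ after finitely many steps, and only then can $H_+(f_{N+n_0+3}|\tfrac12)$ be made smaller than $\eta_0$ and Proposition~\ref{P-DG} invoked. Note also that the role of $q>\tfrac43$ is not where you place it: it is used to ensure $\g>1$, hence $1-2^{-\g}>\tfrac12$, so that the time windows of consecutive scales nest correctly in \eqref{IterDGImpr} (and to guarantee $3-\g>0$ so the initial datum of the iteration is finite, and $[1,\infty)\subset[2^{-n\g},\infty)$ in the rescaled entropy inequality). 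As written, your proof is missing the central iteration and does not establish the proposition.
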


The further restriction $q>\tfrac43$ is chose to arrive at the two-level iteration inequality \eqref{IterDGImpr}, which is one of the key ingredients in the proof of Prooposition \ref{P-ImprovedDG}.

Once Proposition \ref{P-ImprovedDG} is proved, the proof of Theorem \ref{T-PartReg} follows by a Vitali covering argument as in section 6 of \cite{CKN1982}. 

The idea of using the De Giorgi method to prove partial regularity in time of suitable solutions of \eqref{LC} comes from \cite{VasseurNS}, where the partial regularity result in \cite{CKN1982} for the Navier-Stokes equations is recast in terms of
the De Giorgi arguments.

The outline of the paper is as follows: the existence proof of suitable solutions (Proposition \ref{P-ExistSSol}) occupies section \ref{S-SSol}. Then, section \ref{S-DG} contains the proof of Proposition \ref{P-DG}, while section \ref{S-ImprovedDG} 
gives the proof of Proposition \ref{P-ImprovedDG}. Finally, the proof of the main theorem (Theorem \ref{T-PartReg}) is given in section \ref{S-PartReg}.


\section{Existence of Suitable Solutions}\lb{S-SSol}


\subsection{Proof of Proposition \ref{P-ExistSSol}} 


The argument is split in several steps, some of which are already used in the construction of a H-solution in \cite{VillaniH}, but need to be recalled for the sake of clarity.

\noindent
\textit{Step 1: Truncated and regularized initial data.} For each integer $n\ge 1$, set
$$
\xi_n(v)=\xi(\tfrac1nv)\,,\quad\zeta_n(v)=n^3\xi(nv)/\|\xi\|_{L^1(\bR^3)}\,,
$$
where 
$$
\xi\in C^\infty(\bR^3)\,,\qquad \indc_{|v|\le 1}\le\xi(v)\le\indc_{|v|\le 2}\,.
$$
Set
$$
f^n_{in}:=\zeta_n\star(\xi_nf_{in})\,,\quad\tilde f^n_{in}(v):=f^n_{in}(v)+\frac1n e^{-|v|^2/2}\,.
$$
Obviously
$$
\int_{\bR^3}f^n_{in}(v)dv=\int_{\bR^3}\xi_n(v)f^n_{in}(v)dv\le\int_{\bR^3}f_{in}(v)dv\,,
$$
and
$$
\ba
\int_{\bR^3}|v|^2f^n_{in}(v)dv\le&\int_{\bR^3}\left(\int_{\bR^3}\zeta_n(w)|v-w|^2dw\right)f_{in}(v)dv
\\
\le&2\int_{\bR^3}\left(\int_{\bR^3}\zeta_n(w)(|v|^2+|w|^2)dw\right)f_{in}(v)dv
\\
\le&\int_{\bR^3}(2|v|^2+O(1/n^2))f_{in}(v)dv\,.
\ea
$$
Hence
$$
\int_{\bR^3}(1+|v|^2)\tilde f^n_{in}(v)dv\le\int_{\bR^3}(1+2|v|^2+O(1/n))f_{in}(v)dv\,.
$$
Likewise since the function $z\mapsto z(\ln z)_+$ is nondecreasing and convex
$$
\zeta_n\star(\xi_nf_{in})(v)(\ln(\zeta_n\star(\xi_nf_{in})(v))_+\le\zeta_n\star(\xi_nf_{in})(v)(\ln(\xi_nf_{in})(v))_+\,,
$$
so that
$$
\ba
\int_{\bR^3}f^n_{in}(v)(\ln f^n_{in}(v))_+dv\le&\int_{\bR^3}(\xi_nf_{in})(v)(\ln(\xi_nf_{in})(v))_+dv
\\
\le&\int_{\bR^3}f_{in}(v)(\ln f_{in}(v))_+dv<\infty\,.
\ea
$$
Because of the elementary inequality\footnote{Indeed 
$$
\ln(a+b)_+-(\ln a)_+\left\{\ba{}&=\ln(1+b/a)\le b/a\le b&&\text{ if }a>1\,,\\ &\le\ln(1+b)\le b&&\text{ if }a\le1\,.\ea\right.
$$}
$$
(\ln(a+b))_+\le(\ln a)_++b\qquad\text{ for all }a,b>0\,,
$$
one has
$$
\ba
\int_{\bR^3}\tilde f^n_{in}(v)(\ln\tilde f^n_{in}(v))_+dv
\\
\le\int_{\bR^3}f^n_{in}(v)(\ln\tilde f^n_{in}(v))_+dv+\int_{\bR^3}\frac1n e^{-|v|^2/2}(\ln\tilde f^n_{in}(v))_+dv
\\
\le\int_{\bR^3}f^n_{in}(v)((\ln f^n_{in}(v))_++\tfrac1n e^{-|v|^2/2})dv+\int_{\bR^3}\tfrac1n e^{-|v|^2/2}\ln(1+\tilde f^n_{in}(v))dv
\\
\le\int_{\bR^3}f^n_{in}(v)((\ln f^n_{in}(v))_++\tfrac2n e^{-|v|^2/2})dv
\\
\le\int_{\bR^3}f^n_{in}(v)((\ln f^n_{in}(v))_++\tfrac2n)dv&\,.
\ea
$$
Hence
$$
\ba
\int_{\bR^3}(1+|v|^2+(\ln\tilde f^n_{in}(v))_+)\tilde f^n_{in}(v)dv
\\
\le\int_{\bR^3}(1+2|v|^2+(\ln f_{in}(v))_++O(1/n))f_{in}(v)dv&\,.
\ea
$$

\noindent
\textit{Step 2: Truncated and regularized Landau equation.} Set
$$
\psi_n(z):=\tfrac1{8\pi}\min\left(\frac1{|z|},n\right)\,,\quad\Pi(z):=\left(I-\frac{z^{\otimes 2}}{|z|^2}\right)
$$
and 
$$
a_n(s):=\psi_n(z)\Pi(z)\,,
$$
so that
$$
\Div a_n(z)=\tfrac1{8\pi}\min\left(\frac1{|z|},n\right)\Div\left(I-\frac{z^{\otimes 2}}{|z|^2}\right)=-\frac{z}{4\pi|z|^3}\indc_{n|z|>1}
$$
and
$$
\Div(\Div a_n)(z)=-\Div\left(\frac{z}{4\pi|z|^3}\indc_{n|z|>1}\right)=\frac1{4\pi|z|^2}\delta(|z|-\tfrac1n)\ge 0\,.
$$

Let $f^n\equiv f^n(t,v)$ be the solution of the truncated and regularized Landau equation
$$
\ba
\d_tf^n(t,v)=&\frac1n\Dlt_vf^n(t,v)
\\
&+\Div_v\int_{\bR^3}\psi_n(|v-w|)\Pi(v-w)(\grad_v-\grad_w)(f^n(t,v)f^n(t,w))dw
\\
f^n\rstr_{t=0}=&\tilde f^n_{in}\,,
\ea
$$
The truncated and regularized Landau has a unique global smooth solution $f^n$ satisfying
$$
f^n(t,v)\ge C_{n,T}e^{-|v|^2/2}>0\,,\quad\text{ for all }v\in\bR^3\text{ and }0\le t\le T\,,
$$
together with 
$$
\ba
{}&\int_{\bR^3}f^n(t,v)dv=\int_{\bR^3}\tilde f^n_{in}(v)dv
\\
&\int_{\bR^3}|v|^2f^n(t,v)dv=\int_{\bR^3}|v|^2\tilde f^n_{in}(v)dv+\tfrac{6}nt\int_{\bR^3}\tilde f^n_{in}(v)dv\,,
\ea
$$
and the following variant of the H Theorem:
\be\lb{HThm-n}
\ba
\int_{\bR^3}f^n\ln f^{n}(t,v)dv+\int_0^tD_{\psi_n}(f^n(s,\cdot))ds+\tfrac4n\int_0^t\int_{\bR^3}\left|\grad_v\sqrt{f^n(s,v)}\right|^2dvds
\\
=\int_{\bR^3}\tilde f^n_{in}\ln\tilde f^n_{in}(v)dv\le\int_{\bR^3}\tilde f^n_{in}(\ln\tilde f^n_{in}(v))_+dv&\,,
\ea
\ee
with the notation
$$
D_{\psi}(g):=2\int_{\bR^6}\psi(|v-w|)\left|\Pi(\grad_v-\grad_w)\sqrt{g(v)g(w)}\right|^2dvdw\,.
$$

Let $G\equiv G(v)$ be the centered, reduced Gaussian distribution
$$
G(v):=\tfrac1{(2\pi)^{3/2}}e^{-|v|^2/2}\,.
$$
Observe that
$$
f\ln f=(f\ln(f/G)-f+G)-(f\ln(1/G)-f+G)\,,
$$
with
$$
(f\ln(f/G)-f+G)\ge 0
$$
and
$$
f\ln(1/G)-f+G)=f(\tfrac12|v|^2+\tfrac32\ln(2\pi)-1)+G\ge 0\,,
$$
so that
$$
f(f\ln f)_+\le(f\ln(f/G)-f+G)\quad\text{ and }\quad f(\ln f)_-\le(f\ln(1/G)-f+G)\,.
$$
Hence \eqref{HThm-n}  becomes
$$
\ba
\int_{\bR^3}f^n(\ln f^{n}(t,v))_+dv+\int_0^tD_{\psi_n}(f^n(s,\cdot))ds+\tfrac4n\int_0^t\int_{\bR^3}\left|\grad_v\sqrt{f^n(s,v)}\right|^2dvds
\\
\le\int_{\bR^3}\tilde f^n_{in}(\ln\tilde f^n_{in}(v))_+dv+\int_{\bR^3}(f^n(t,v)\ln(1/G(v))-f^n(t,v)+G(v))dv
\\
=\int_{\bR^3}\tilde f^n_{in}(\ln\tilde f^n_{in}(v))_+dv+\int_{\bR^3}f^n(t,v)(\tfrac12|v|^2+\tfrac32\ln(2\pi)-1)dv+1
\\
=\int_{\bR^3}\tilde f^n_{in}(\ln\tilde f^n_{in}(v))_+dv+\int_{\bR^3}\tilde f^n_{in}(v)(\tfrac12|v|^2+\tfrac{6t}{n}+\tfrac32\ln(2\pi)-1)dv+1&\,.
\ea
$$
In particular, one has
$$
\ba
\int_{\bR^3}f^n(\ln f^{n}(t,v))_+dv\le&\int_{\bR^3}\tilde f^n_{in}(\ln\tilde f^n_{in}(v))_+dv
\\
&+\int_{\bR^3}\tilde f^n_{in}(v)(\tfrac12|v|^2+\tfrac{6t}{n}+\tfrac32\ln(2\pi)-1)dv+1\,,
\ea
$$
and
$$
\ba
\int_0^tD_{\psi_n}(f^n(s,\cdot))ds+\tfrac4n\int_0^t\int_{\bR^3}\left|\grad_v\sqrt{f^n(s,v)}\right|^2dvds
\\
\le\int_{\bR^3}\tilde f^n_{in}(\ln\tilde f^n_{in}(v))_+dv+\int_{\bR^3}\tilde f^n_{in}(v)(\tfrac12|v|^2+\tfrac{6t}{n}+\tfrac32\ln(2\pi)-1)dv+1&\,.
\ea
$$
In the limit as $n\to\infty$, one has
$$
f^n(t,\cdot)\wto f(t,\cdot)\text{ in }L^1(\bR^3)\text{ uniformly in }t\in[0,T]\text{ for all }T>0\,,
$$
where $f$ is an H-solution of the Cauchy problem for \eqref{LC} with initial data $f_{in}$, according to \cite{VillaniH} on p. 297.

\noindent
\textit{Step 3: A.e. pointwise convergence of $f^n$.} By Theorem 3 in \cite{DesviJFA}, since $\psi_n$ satisfies the assumption used there with $\g_1=-3$ and $K_3=\tfrac1{8\pi}$, one has
$$
\int_{\bR^3}\frac{|\grad_v\sqrt{f^n(t,v)}|^2}{(1+|v|)^3}dv\le C_D[t,\|\tilde f^n_{in}\|_{L^1_2(\bR^3)},\|\tilde f^n_{in}(\ln\tilde f^n_{in})_+\|_{L^1(\bR^3)}](1+D_{\psi_n}(f^n(t,\cdot)))\,.
$$
Hence
$$
\ba
\frac1{C_D}\int_0^t\int_{\bR^3}\frac{|\grad_v\sqrt{f^n(s,v)}|^2}{(1+|v|)^3}dvds+\tfrac4n\int_0^t\int_{\bR^3}\left|\grad_v\sqrt{f^n(s,v)}\right|^2dvds
\\
\le\int_{\bR^3}\tilde f^n_{in}(\ln\tilde f^n_{in}(v))_+dv+\int_{\bR^3}\tilde f^n_{in}(v)(\tfrac12|v|^2+\tfrac{6t}{n}+\tfrac32\ln(2\pi)-1)dv+1+t&\,.
\ea
$$

Next we apply Proposition 4 in \cite{DesviJFA}. Observe that 
$$
\psi_n(z)=\min\left(\frac1{|z|},n\right)
$$
satisfies the assumption in Proposition 4 of \cite{DesviJFA} with $\g_1=\g_2=-3$ and $\de=2$, and with $K_1=K_2=K_3=1$. By Lemma 3 of \cite{DesviJFA}
$$
\int_{\bR^3}\frac{f^n(t,v)^3dv}{(1+|v|)^9}\le C\left(\int_{\bR^3}f^n(t,v)dv+\int_{\bR^3}\frac{|\grad_v\sqrt{f^n(t,v)}|^2}{(1+|v|)^3}dv\right)\,,
$$
so that $\cQ_{T,3,-3}(f^n)\le C_T$ for all $T>0$. Then, for all $T>0$ and all $t\in[0,T]$, one has
$$
\int_{\bR^3}(1+|v|^2)^kf(t,v)dv\le C'_D[T,\|\tilde f^n_{in}\|_{L^1_{2k}(\bR^3)},\|\tilde f^n_{in}(\ln\tilde f^n_{in})_+\|_{L^1(\bR^3)},\cQ_{T,3,-3}(f^n)]\,.
$$

By H\"older's inequality, choosing $p'=2/q$ (so that $q\in(1,2)$)
$$
\ba
\int_{\bR^3}|\grad_vf^n(t,v)^{1/q}|^qdv=(\tfrac2q)^q\int_{\bR^3}f^n(t,v)^{1/p}|\grad_v\sqrt{f^n(t,v)}|^{2/p'}dv
\\
=(\tfrac2q)^q\int_{\bR^3}(1+|v|^2)^{3/2p'}f^n(t,v)^{1/p}(1+|v|^2)^{-3/2p'}|\grad_v\sqrt{f^n(t,v)}|^{2/p'}dv
\\
\le(\tfrac2q)^q\left(\int_{\bR^3}(1+|v|^2)^{3p/2p'}f^n(t,v)dv\right)^{1/p}\left(\int_{\bR^3}\frac{|\grad_v\sqrt{f^n(t,v)}|^2}{(1+|v|^2)^{3/2}}dv\right)^{1/p'}
\\
\le (\tfrac2q)^qC'_D[T,\|\tilde f^n_{in}\|_{L^1_{3p-3}(\bR^3)},\|\tilde f^n_{in}(\ln\tilde f^n_{in})_+\|_{L^1(\bR^3)},\cQ_{T,3,-3}(f^n)]^{1/p}
\\
\times\left(\int_{\bR^3}\frac{|\grad_v\sqrt{f^n(t,v)}|^2}{(1+|v|^2)^{3/2}}dv\right)^{1/p'}&\,.
\ea
$$
Hence
$$
\ba
\left(\int_{\bR^3}|\grad_vf^n(t,v)^{1/q}|^qdv\right)^{2/q}
\\
\le(\tfrac2q)^2C'_D[T,\|\tilde f^n_{in}\|_{L^1_{3p-3}(\bR^3)},\|\tilde f^n_{in}(\ln\tilde f^n_{in})_+\|_{L^1(\bR^3)},\cQ_{T,3,-3}(f^n)]^{\frac1{p-1}}
\\
\times\int_{\bR^3}\frac{|\grad_v\sqrt{f^n(t,v)}|^2}{(1+|v|^2)^{3/2}}dv
\\
\le(\tfrac2q)^2C'_D[T,\|\tilde f^n_{in}\|_{L^1_{3p-3}(\bR^3)},\|\tilde f^n_{in}(\ln\tilde f^n_{in})_+\|_{L^1(\bR^3)},\cQ_{T,3,-3}(f^n)]^{\frac1{p-1}}
\\
\times C_D[T,\|\tilde f^n_{in}\|_{L^1_2(\bR^3)},\|\tilde f^n_{in}(\ln\tilde f^n_{in})_+\|_{L^1(\bR^3)}](1+D_{\psi_n}(f^n(t,\cdot))&\,.
\ea
$$

Therefore
$$
\ba
C_E\int_0^t\left(\int_{\bR^3}|\grad_vf^n(s,v)^{1/q}|^qdv\right)^{2/q}ds+\tfrac4n\int_0^t\int_{\bR^3}\left|\grad_v\sqrt{f^n(s,v)}\right|^2dvds
\\
\le\int_{\bR^3}\tilde f^n_{in}(\ln\tilde f^n_{in}(v))_+dv+\int_{\bR^3}\tilde f^n_{in}(v)(\tfrac12|v|^2+\tfrac{6t}{n}+\tfrac32\ln(2\pi)-1)dv+1+t&\,,
\ea
$$
with
$$
\ba
\frac1{C_E}:=(\tfrac2q)^2C'_D[T,\|\tilde f^n_{in}\|_{L^1_{3p-3}(\bR^3)},\|\tilde f^n_{in}(\ln\tilde f^n_{in})_+\|_{L^1(\bR^3)},\cQ_{T,3,-3}(f^n)]^{\frac1{p-1}}
\\
\times C_D[T,\|\tilde f^n_{in}\|_{L^1_2(\bR^3)},\|\tilde f^n_{in}(\ln\tilde f^n_{in})_+\|_{L^1(\bR^3)}]\,.
\ea
$$

Hence
$$
(f^n)^{1/q}=O(1)_{L^\infty(0,T;L^q(\bR^3))}\quad\text{ and }\quad\grad_v(f^n)^{1/q}=O(1)_{L^2(0,T;L^q(\bR^3))}\,.
$$
In particular, 
$$
\grad_vf^n=q(f^n)^{1/q'}\grad_v(f^n)^{1/q}=O(1)_{L^2(0,T;L^1(\bR^3))}\,,
$$
since 
$$
f^n=O(1)_{L^\infty(\bR_+;L^1(\bR^3))}\,.
$$

On the other hand
$$
\ba
\d_t\int_{\bR^3}f^n(t,v)\phi(v)dv=\frac1n\int_{\bR^3}\Dlt\phi(v)f^n(t,v)dv
\\
-\iint_{\bR^6}\psi_n(v-w)\sqrt{f^n(t,v)f^n(t,w)}\Pi(v-w)(\grad_v-\grad_w)\sqrt{f^n(t,v)f^n(t,w)}
\\
\cdot(\grad\phi(v)-\grad\phi(w))dvdw
\ea
$$
and
$$
\ba
\Big|\iint_{\bR^6}\psi_n(v-w)\sqrt{f^n(t,v)f^n(t,w)}\Pi(v-w)(\grad_v-\grad_w)\sqrt{f^n(t,v)f^n(t,w)}
\\
\cdot(\grad\phi(v)-\grad\phi(w))dvdw\Big|\le\|\grad^2\phi\|_{L^\infty}\|f^{n}(t,\cdot)\|_{L^1(\bR^3)}
\\
\times\left(\iint_{\bR^6}\psi_n(v-w)\left|\Pi(v-w)(\grad_v-\grad_w)\sqrt{f^n(t,v)f^n(t,w)}\right|^2dvdw\right)^{1/2}&\,,
\ea
$$
for all $\phi\in W^{2,\infty}(\bR^3)$, so that
$$
\ba
\left\|\d_t\int_{\bR^3}f^n(t,v)\phi(v)dv\right\|_{L^2(0,T)}
\\
\le C[T,\|\tilde f^n_{in}\|_{L^1_2(\bR^3)},\|\tilde f^n_{in}(\ln\tilde f^n_{in})_+\|_{L^1(\bR^3)}]\|\grad^2\phi\|_{L^\infty(\bR^3)}&\,.
\ea
$$
In other words
$$
f^n=O(1)_{L^2(0,T;W^{-2,1}(\bR^3))}\,.
$$
By the Aubin-Lions lemma (Lemmas 24.5 and 24.3 in \cite{TartarNSE}), $f^n$ is relatively compact in $L^1_{loc}(\bR_+\times\bR^3)$ and, possibly after extracting a subsequence of $f^n$, one has 
$$
f^n\to f\text{ a.e. on }[0,+\infty)\times\bR^3\,.
$$

\smallskip
\noindent
\textit{Step 4: Truncated entropy inequality} For $\ka>0$, multiplying both sides of the truncated and regularized Landau equation by $(\ln(f^n(t,v)/\ka))_+$ and integrating in $v$ shows that
$$
\ba
H_+(f^n(t_2,\cdot)|\ka)+\frac1n\int_{t_1}^{t_2}\int_{\bR^3}\frac{|\grad_v(f^n(t,v)-\ka)_+|^2}{f^n(t,v)}dv
\\
+\tfrac12\int_{t_1}^{t_2}\iint_{\bR^6}a_n(v-w):\left(\grad_v\left(\ln\frac{f^n(t,v)}{\ka}\right)_+\!\!\!-\grad_w\left(\ln\frac{f^n(t,w)}{\ka}\right)_+\right)^{\otimes 2}
\\
\times f^n(t,v)f^n(t,w)dvdwdt
\\
=-\int_{t_1}^{t_2}\iint_{\bR^6}a_n(v-w):\grad_v\left(\ln\frac{f^n(t,v)}{\ka}\right)_+\otimes\grad_w\left(\ln\frac{f^n(t,w)}{\ka}\right)_-
\\
\times f^n(t,v)f^n(t,w)dvdwdt+H_+(f^n(t_1,\cdot)|\ka)&\,.
\ea
$$
Now
$$
\ba
-\iint_{\bR^6}a_n(v-w)f^n(t,v)f^n(t,w):\grad_v\left(\ln\frac{f^n(t,v)}{\ka}\right)_+\otimes\grad_w\left(\ln\frac{f^n(t,w)}{\ka}\right)_-dvdw
\\
=-\iint_{\bR^6}a_n(v-w):\grad_v(f^n(t,v)-\ka)_+\otimes\grad_w((\ka-f^n(t,w))_+-\ka)dvdw
\\
=\iint_{\bR^6}\Div(\Div a_n)(v-w)(f^n(t,v)-\ka)_+(\ka-(\ka-f^n(t,w))_+)dvdw&\,,
\ea
$$
so that
$$
\ba
-\iint_{\bR^6}a_n(v-w)f^n(t,v)f^n(t,w):\grad_v\left(\ln\frac{f^n(t,v)}{\ka}\right)_+\otimes\grad_w\left(\ln\frac{f^n(t,w)}{\ka}\right)_-dvdw
\\
=\iint_{\bR^6}\frac1{4\pi|v-w|^2}\delta(|v-w|-\eps)(f^n(t,v)-\ka)_+(\ka-(\ka-f^n(t,w))_+)dvdw
\\
\le\frac{\ka}{4\pi}\iint_{\bR^6}\frac1{|v-w|^2}\delta(|v-w|-\eps)(f^n(t,v)-\ka)_+dvdw
\\
=\ka\int_{\bR^3}(f^n(t,v)-\ka)_+dvdw&\,.
\ea
$$
In the end, for each $\ka>0$ and each $t_1,t_2$ such that $0\le t_1\le t_2<\infty$, one has
\be\lb{H+nIneq}
\ba
H_+(f^n(t_2,\cdot)|\ka)+\frac1n\int_{t_1}^{t_2}\int_{\bR^3}\frac{|\grad_v(f^n(s,v)-\ka)_+|^2}{f^n(s,v)}dvds
\\
+\tfrac12\int_{t_1}^{t_2}\!\!\!\iint_{\bR^6}\!\!b_n(v\!-\!w):\left(\grad_v\left(\ln\frac{f^n(s,v)}{\ka}\right)_+\!\!\!-\grad_w\left(\ln\frac{f^n(s,w)}{\ka}\right)_+\right)^{\otimes 2}\!\!
\\
\times f^n(s,v)f^n(s,w)dvdwds
\\
\le\ka\int_{t_1}^{t_2}\int_{\bR^3}(f^n(s,v)-\ka)_+dvds+H_+(f^n(t_1,\cdot)|\ka)&\,.
\ea
\ee

\smallskip
\noindent
\textit{Step 5: A local lower bound for the truncated entropy production} We begin with the following auxiliary result, whose proof is deferred until the end of the present section. This is an extension of Theorem 3 in \cite{DesviJFA} to truncated
entropies.

\begin{Lem}\lb{L-CoercH+}
The sequence $f^n$ constructed in step 1 of the present section satisfies the inequality:
$$
\ba
\int_{\bR^3}\frac{|\grad_v\sqrt{f^n(t,v)}|^2}{(1+|v|)^3}\indc_{f^n(t,v)>\ka}dv
\\
\le C''_D\iint_{\bR^6}\tfrac{f^n(t,v)f^n(t,w)}{|v-w|^3}\Pi(v-w):\left(\grad_v\left(\ln\tfrac{f^n(t,v)}\ka\right)_+-\grad_w\left(\ln\tfrac{f^n(t,w)}\ka\right)_+\right)^{\otimes 2}dvdw
\\
+C''_D\int_{\bR^3}(f^n(t,w)-\ka)_+dw&\,,
\ea
$$
where
$$
\ba
C''_D\equiv C''_D[M_0(f^n)M_2(t,f^n)^2,H(f_{in})]
\\
:=\!18C[M_2(t,f^n)^2,H(f_{in})]^2M_2(t,f^n)^4M_0(f^n)\left(1\!\!+\!\!\tfrac1{\sqrt{e\l}}\right)^2\max\left(4,4\l^2,(\tfrac3{2e\l})^{3/2}\right)&,
\ea
$$
with
$$
M_0(f^n):=\int_{\bR^3}f^n(t,v)dv\,,\quad M_2(t,f^n):=\int_{\bR^3}(1+|v|^2)f^n(t,v)dv\,,
$$
and where $C''[M_2(t,f^n)^2,H(f_{in})]$ is the constant $C(N,\bar H)$ of Lemma 2 of \cite{DesviJFA}.
\end{Lem}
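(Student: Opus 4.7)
The strategy is to extend the proof of Theorem 3 in \cite{DesviJFA} by systematically replacing the full logarithmic derivative $\grad\ln f^n$ with its truncated version $\grad(\ln(f^n/\ka))_+$; the latter coincides with $\grad\ln f^n$ on $\{f^n>\ka\}$ and vanishes elsewhere. The indicator $\indc_{f^n(t,v)>\ka}$ on the left-hand side is compatible with this replacement through the pointwise identity
$$2\,\grad_v\sqrt{f^n(t,v)}\,\indc_{f^n(t,v)>\ka} = \sqrt{f^n(t,v)}\,\grad_v\bigl(\ln(f^n(t,v)/\ka)\bigr)_+,$$
so the claim reduces to bounding $\int_{\bR^3} \tfrac{f^n(t,v)}{(1+|v|)^3}\bigl|\grad_v(\ln(f^n(t,v)/\ka))_+\bigr|^2 dv$ from above by the right-hand side of the lemma.

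For each $v$ with $f^n(t,v)>\ka$ I pick a unit vector $e\in\bS^2$ realizing $|e\cdot\grad_v(\ln(f^n(t,v)/\ka))_+| = |\grad_v(\ln(f^n(t,v)/\ka))_+|$, and exploit the fact that $\Pi(v-w)e=e$ for every $w\in v+e^\perp$. For such $w$,
$$e\cdot\grad_v\bigl(\ln\tfrac{f^n(t,v)}\ka\bigr)_+ = e\cdot\Pi(v-w)\bigl[\grad_v(\ln\tfrac{f^n(t,v)}\ka)_+ - \grad_w(\ln\tfrac{f^n(t,w)}\ka)_+\bigr] + e\cdot\Pi(v-w)\grad_w\bigl(\ln\tfrac{f^n(t,w)}\ka\bigr)_+.$$
I multiply both sides by $f^n(t,w)$, integrate $w$ against the Gaussian-type weight playing the role of the density on a thickened neighborhood of $v+e^\perp$ in Lemma 2 of \cite{DesviJFA}, and apply Cauchy--Schwarz after squaring. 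The first, antisymmetric piece, after integrating in $v$ against $f^n(t,v)/(1+|v|)^3$, produces a constant multiple of the entropy-production integral on the RHS of the claim; the weight $1/|v-w|^3$ arises from combining the $\Pi(v-w)$ tensor with the Jacobian of the hyperplane parametrization, exactly as in \cite{DesviJFA}.

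The novel ingredient is the control of the second, non-symmetric piece. I use the key identity $f^n(t,w)\grad_w(\ln(f^n(t,w)/\ka))_+ = \grad_w(f^n(t,w)-\ka)_+$, and then integrate by parts in $w$ (no boundary contribution, thanks to the compactly supported cutoff) to convert this piece into $(f^n(t,w)-\ka)_+$ tested against derivatives of $\Pi(v-w)$ times the weight. This produces precisely the $\int(f^n(t,w)-\ka)_+\,dw$ correction on the RHS. The explicit constant $C''_D$ with its displayed dependence on $M_0(f^n)$, $M_2(t,f^n)$ and $H(f_{in})$ is then obtained by Chebyshev-type estimates on the hyperplane together with the uniform entropy control from Lemma 2 of \cite{DesviJFA}. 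The main obstacle is exactly this last step: one has to verify that the derivatives of $\Pi(v-w)/|v-w|^k$ against the cutoff yield a kernel integrable in $w$ uniformly in $v$, with the quantitative control that matches precisely the factor $(1+1/\sqrt{e\l})^2\max(4,4\l^2,(3/(2e\l))^{3/2})$ appearing in $C''_D$.
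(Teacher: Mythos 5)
Your proposal follows essentially the same route as the paper: both adapt the proof of Theorem 3 in \cite{DesviJFA}, isolating the antisymmetric piece (which yields the entropy-production integral) and handling the cross term via the identity $f^n\grad_w(\ln(f^n/\ka))_+=\grad_w(f^n-\ka)_+$ followed by integration by parts onto the Gaussian weight, which is exactly how the paper produces the $\int(f^n-\ka)_+\,dw$ correction. The only difference is presentational: where you invoke a hyperplane/direction-$e$ heuristic, the paper implements the same step rigorously through Desvillettes' $2\times2$ determinants $q_{ij}$, the moment matrix $\Gamma_{ji}[\l,f^n]$ and Cramer's rule with the determinant lower bound of Lemma 2 of \cite{DesviJFA}, which is also where the explicit constant $C''_D$ comes from.
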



\smallskip
Applying Lemma \ref{L-CoercH+} shows that
$$
\ba
H_+(f^n(t_2,\cdot)|\ka)+&\frac1n\int_{t_1}^{t_2}\int_{\bR^3}\frac{|\grad_v(f^n(s,v)-\ka)_+|^2}{f^n(s,v)}dvds
\\
+&\frac1{C''_D}\int_{t_1}^{t_2}\int_{\bR^3}\frac{|\grad_v\sqrt{f^n(t,v)}|^2}{(1+|v|)^3}\indc_{f^n(s,v)\ge\ka}dvds
\\
\le&(1+\ka)\int_{t_1}^{t_2}\int_{\bR^3}(f^n(s,v)-\ka)_+dvds+H_+(f^n(t_1,\cdot)|\ka)\,,
\ea
$$
for all $t_1,t_2$ such that $0\le t_1<t_2<\infty$. By Proposition 4 \cite{DesviJFA} recalled in step 3 above, for all $T>0$ and all $t\in[0,T]$, one has
$$
\int_{\bR^3}(1+|v|^2)^kf(t,v)dv\le C'_D[T,\|\tilde f^n_{in}\|_{L^1_{2k}(\bR^3)},\|\tilde f^n_{in}(\ln\tilde f^n_{in})_+\|_{L^1(\bR^3)},\cQ_{T,3,-3}(f^n)]\,.
$$

By H\"older's inequality, choosing $p'=2/q$ (so that $q\in(1,2)$)
$$
\ba
\int_{\bR^3}|\grad_v(f^n(t,v)^{1/q}-\ka^{1/q})_+|^qdv=\int_{\bR^3}|\grad_vf^n(t,v)^{1/q}|^q\indc_{f^n(t,v)\ge\ka}dv
\\
=(\tfrac2q)^q\int_{\bR^3}f^n(t,v)^{1/p}|\grad_v\sqrt{f^n(t,v)}|^{2/p'}\indc_{f^n(t,v)\ge\ka}dv
\\
=(\tfrac2q)^q\int_{\bR^3}(1+|v|^2)^{3/2p'}f^n(t,v)^{1/p}(1+|v|^2)^{-3/2p'}|\grad_v\sqrt{f^n(t,v)}|^{2/p'}\indc_{f^n(t,v)\ge\ka}dv
\\
\le(\tfrac2q)^q\left(\int_{\bR^3}(1+|v|^2)^{3p/2p'}f^n(t,v)dv\right)^{1/p}\left(\int_{\bR^3}\frac{|\grad_v\sqrt{f^n(t,v)}|^2\indc_{f^n(t,v)\ge\ka}}{(1+|v|^2)^{3/2}}dv\right)^{1/p'}
\\
\le (\tfrac2q)^qC'_D[T,\|\tilde f^n_{in}\|_{L^1_{3p-3}(\bR^3)},\|\tilde f^n_{in}(\ln\tilde f^n_{in})_+\|_{L^1(\bR^3)},\cQ_{T,3,-3}(f^n)]^{1/p}
\\
\times\left(\int_{\bR^3}\frac{|\grad_v\sqrt{f^n(t,v)}|^2\indc_{f^n(t,v)\ge\ka}}{(1+|v|^2)^{3/2}}dv\right)^{1/p'}\,.
\ea
$$

Hence
$$
\ba
H_+(f^n(t_2,\cdot)|\ka)+&\frac1n\int_{t_1}^{t_2}\int_{\bR^3}\frac{|\grad_v(f^n(s,v)-\ka)_+|^2}{f^n(s,v)}dvds
\\
+&C'_E\int_{t_1}^{t_2}\left(\int_{\bR^3}|\grad_v(f^n(t,v)^{1/q}-\ka^{1/q})_+|^qdv\right)^{2/q}ds
\\
\le&(1+\ka)\int_{t_1}^{t_2}\int_{\bR^3}(f^n(s,v)-\ka)_+dvds+H_+(f^n(t_1,\cdot)|\ka)\,,
\ea
$$
with 
$$
\ba
\frac1{C'_E}:=(\tfrac2q)^qC'_D[T,\|\tilde f^n_{in}\|_{L^1_{3p-3}(\bR^3)},\|\tilde f^n_{in}(\ln\tilde f^n_{in})_+\|_{L^1(\bR^3)},\cQ_{T,3,-3}(f^n)]^{1/p}
\\
\times C''_D[M_0(f^n)M_2(T,f^n)^2,H(f_{in})]&\,.
\ea
$$

\smallskip
\noindent
\textit{Step 6: Passing to the limit in the truncated entropy.} By Sobolev embedding, for each $T>0$, one has
$$
(f^n)^{1/q}=O(1)_{L^2(0,T;L^{q^*}(\bR^3))}\,,
$$
and hence
$$
f^n=O(1)_{L^{2/q}(0,T;L^{q^*/q}(\bR^3))}\,.
$$
Since $1<q<2<q'$,  and
$$
(1+|v|^2)f^n=O(1)_{L^\infty(0,T;L^1(\bR^3))}\,,
$$
this implies that
$$
(1+|v|^2)^{\frac{2-q}{4-q}}f^n=O(1)_{L^{\frac4q-1}([0,T]\times\bR^3)}
$$
(applying Proposition 5 of \cite{DesviJFA} with $1-\b=\tfrac{2}{4-q}$). 

Then
$$
\ba
\int_0^T\left|\int_{\bR^3}(h_+(f(t,v)/\ka)-h_+(f^n(t,v)/\ka))dv\right|dt
\\
\le\int_0^T\int_{\bR^3}|h_+(f(t,v)/\ka)-h_+(f^n(t,v)/\ka)|dvdt&\to 0\text{ as }n\to\infty\,.
\ea
$$
Indeed, $h_+(f(t,v)/\ka)-h_+(f^n(t,v)/\ka)\to 0$ for a.e. $(t,v)\in[0,T)\times\bR^3$ as $n\to\infty$; moreover, the sequence $h_+(f/\ka)-h_+(f^n/\ka)$ is equiintegrable and tight on $[0,T]\times\bR^3$ since we have seen that there exists 
$\a_1,\a_2>0$ such that
$$
(1+|v|^2)^{\a_1}(f^n)^{1+\a_2}=O(1)_{L^1([0,T]\times\bR^3)}\,.
$$
Hence, for all $\ka>0$, one has
$$
H_+(f^n|\ka)\to H_+(f|\ka)\text{ in }L^1_{loc}([0,+\infty))\text{ as }n\to+\infty\,,
$$
and hence, possibly after extracting a subsequence if necessary
$$
H_+(f^n(t,\cdot)|\ka)\to H_+(f(t,\cdot)|\ka)\text{ for a.e. }t\ge 0\text{ as }n\to+\infty\,.
$$

The subsequence and the negligible exceptional set of times where the pointwise convergence above is not valid may depend on $\kappa$. Henceforth, we restrict our attention to the case of $\kappa$ being a rational number, and since 
there are countably many such numbers, the union of the exceptional sets corresponding to each rational $\kappa$ is another negligible set $\cN$. Hence, by diagonal extraction
$$
H_+(f^n(t,\cdot)|\ka)\to H_+(f(t,\cdot)|\ka)\text{ for all }t\in\bR_+\setminus\cN\text{ and all }\ka\in\bQ\cap[1,+\infty)
$$
as $n\to\infty$.

\smallskip
\noindent
\textit{Step 7: Passing to the limit in the truncated entropy production.} One has
$$
((f^n)^{1/q}-\ka^{1/q})_+\to(f^{1/q}-\ka^{1/q})_+\text{ a.e. on }\bR_+\times\bR^3\,,
$$
and
$$
((f^n)^{1/q}-\ka^{1/q})_+^q\le f^n=O(1)_{L^\infty(0,T;L^1_{-2}(\bR^3))}\,.
$$
Hence $((f^n)^{1/q}-\ka^{1/q})_+$ is equiintegrable and tight on $[0,T]\times\bR^3$, so that
$$
((f^n)^{1/q}-\ka^{1/q})_+\to(f^{1/q}-\ka^{1/q})_+\text{ in }L^1([0,T]\times\bR^3)\,.
$$
By continuity of the gradient in the sense of distributions, we already know that
$$
\grad_v((f^n)^{1/q}-\ka^{1/q})_+\to\grad_v(f^{1/q}-\ka^{1/q})_+\text{ in }\cD'((0,T)\times\bR^3)\,.
$$

On the other hand
$$
\grad_v((f^n)^{1/q}-\ka^{1/q})_+=\indc_{f^n\ge\ka}\grad_v(f^n)^{1/q}=O(1)_{L^2(0,T;L^q(\bR^3))}\,.
$$
Since $L^2(0,T;L^q(\bR^3))$ is reflexive because $1<q<2$ (see Corollary 2 in Chapter IV, \S 1 of \cite{DiestelUhl}), the Banach-Alaloglu theorem implies that, possibly after extracting a subsequence,
$$
\grad_v((f^n)^{1/q}-\ka^{1/q})_+\wto\ell\quad\text{ in }L^2(0,T;L^q(\bR^3))\,,
$$
and therefore in $\cD'((0,T)\times\bR^3)$. Hence 
$$
\ell=\grad_v(f^{1/q}-\ka^{1/q})_+\,,
$$
and by compactness and uniqueness of the limit
$$
\grad_v((f^n)^{1/q}-\ka^{1/q})_+\wto\grad_v(f^{1/q}-\ka^{1/q})_+\quad\text{ in }L^2(0,T;L^q(\bR^3))\,.
$$

For all $t_1<t_2\in\bR_+\setminus\cN$, and all $\ka\in\bQ\cap[1,+\infty)$, one has
$$
\ba
\int_{t_1}^{t_2}\left(\int_{\bR^3}|\grad_v((f^n)^{1/q}-\ka^{1/q})_+|^qdv\right)^{2/q}dt\le &H_+(f^n(t_1,\cdot)|\ka)-H_+(f^n(t_2,\cdot)|\ka)
\\
&+(1+\ka)\int_{t_1}^{t_2}\int_{\bR^3}(f^n(t,v)-\ka)_+dvdt\,,
\ea
$$
and passing to the limit as $n\to+\infty$, one finds that
$$
\ba
\int_{t_1}^{t_2}\left(\int_{\bR^3}|\grad_v(f^{1/q}-\ka^{1/q})_+|^qdv\right)^{2/q}dt\le&\varliminf_{n\to\infty}\int_{t_1}^{t_2}\left(\int_{\bR^3}\grad_v((f^n)^{1/q}-\ka^{1/q})_+^qdv\right)^{2/q}\!\!dt
\\
\le&H_+(f(t_1,\cdot)|\ka)-H_+(f(t_2,\cdot)|\ka)
\\
&+(1+\ka)\int_{t_1}^{t_2}\int_{\bR^3}(f(t,v)-\ka)_+dvdt\,,
\ea
$$
where the first inequality is Proposition III.5 (iii) of \cite{Brezis}.

\smallskip
\noindent
\textit{Step 8: Writing \eqref{SuitSolIneq} for all $\ka\ge 1$.} Let $\ka>1$, and let $\ka_j$ be an increasing sequence of rational numbers converging to $\ka$. Since $h_+$ is nondecreasing, for all $t\in\bR_+\setminus\cN$, one has 
$$
0\le h_+(f(t,v)/\ka_j)\le h_+(f(t,v)/\ka_1)\quad\text{ for a.e. }v\ni\bR^3\,,
$$
so that 
$$
\frac1{\ka_j}H_+(f(t,\cdot)|\ka_j)\to\frac1{\ka}H_+(f(t,\cdot)|\ka)\quad\text{ as }j\to+\infty
$$
for all $t\in\bR_+\setminus\cN$ by dominated convergence. By the same token
$$
|\grad_v(f^{1/q}-\ka_j^{1/q})_+|^q=|\grad_v(f^{1/q})|^q\indc_{f\ge\ka_j}\le|\grad_v(f^{1/q})|^q\indc_{f\ge\ka_1}=|\grad_v(f^{1/q}-\ka_1^{1/q})_+|^q\,,
$$
a.e. on $(0,+\infty)\times\bR^3$, so that
$$
\int_{t_1}^{t_2}\int_{\bR^3}|\grad_v(f^{1/q}-\ka_j^{1/q})_+|^qdvdt\to\int_{t_1}^{t_2}\int_{\bR^3}|\grad_v(f^{1/q}-\ka_j^{1/q})_+|^qdvdt
$$
as $j\to+\infty$ by dominated convergence. Since $f$ satisfies \eqref{SuitSolIneq} for each $k_\in\bQ\cap[1,+\infty)$, and since this set is dense in $[1,+\infty)$, we conclude from the argument above  that $f$ satisfies \eqref{SuitSolIneq} 
for each $\ka\ge 1$, by passing to the limit as $j\to+\infty$ in \eqref{SuitSolIneq} written for $\ka_j$.

This concludes the proof of Proposition \ref{P-ExistSSol}

\subsection{Proof of Lemma \ref{L-CoercH+}} 


This proof is a simple variant of the proof of Theorem 3 in \cite{DesviJFA}, with a few additional terms which require a specific treatment. 

Start from the elementary identity
$$
\Tr((X\wedge Y)^2)=2(X\cdot Y)^2-2|X|^2|Y|^2\,,
$$
which holds for all $X,Y\in\bR^3$, with the notation $X\wedge Y=X\otimes Y-Y\otimes X$. For all $v\not=w\in\bR^3$, setting $X=v-w$ and
$$
Y:=\grad_v\left(\ln\frac{f^n(t,v)}{\ka}\right)_+-\grad_w\left(\ln\frac{f^n(t,w)}{\ka}\right)_+
$$
leads to the identity
\be\lb{Ident-qij}
\ba
(I-\Pi(v-w)):\left(\grad_v\left(\ln\frac{f^n(t,v)}{\ka}\right)_+-\grad_w\left(\ln\frac{f^n(t,w)}{\ka}\right)_+\right)^{\otimes 2}
\\
=\sum_{1\le i<j\le 3}\frac{|q_{ij}^{f^n}(t,v,w)|^2}{|v-w|^2}&\,,
\ea
\ee
where
$$
\ba
q_{ij}^{f^n}(t,v,w)&=\left|\begin{matrix}v_i-w_i &Z_i-\d_{w_i}\left(\ln\frac{f^n(t,w)}{\ka}\right)_+\\  \\ v_j-w_j&Z_j-\d_{w_j}\left(\ln\frac{f^n(t,w)}{\ka}\right)_+\end{matrix}\right|
\\
&=Z_{ij}+w_jZ_i-w_iZ_j-\left|\begin{matrix}v_i-w_i &\d_{w_i}\left(\ln\frac{f^n(t,w)}{\ka}\right)_+\\  \\ v_j-w_j&\d_{w_j}\left(\ln\frac{f^n(t,w)}{\ka}\right)_+\end{matrix}\right|
\ea
$$
and
$$
Z_i=\d_{v_i}\left(\ln\frac{f^n(t,v)}{\ka}\right)_+\,,\quad Z_j=\d_{v_j}\left(\ln\frac{f^n(t,v)}{\ka}\right)_+\,,\quad Z_{ij}=v_iZ_j-v_jZ_i\,.
$$
Hence
$$
\ba
\int_{\bR^3}e^{-\l|w|^2}f^n(t,w)\left(\begin{matrix}1\\ w_j\\ -w_i\end{matrix}\right)q_{ij}^{f^n}(t,v,w)dw=\Ga_{ji}[\l,f^n(t,\cdot)]\left(\begin{matrix}Z_{ij}\\ Z_i\\ Z_j\end{matrix}\right)
\\
-\int_{\bR^3}e^{-\l|w|^2}\left(\begin{matrix}1\\ w_j\\ -w_i\end{matrix}\right)\left|\begin{matrix}v_i-w_i &\d_{w_i}(f^n(t,w)-\ka)_+\\  \\ v_j-w_j&\d_{w_j}(f^n(t,w)-\ka)_+\end{matrix}\right|dw\,,
\ea
$$
since $f^n\d_{w_k}(\ln f^n/\ka)_+=\d_{w_k}f^n\indc_{f^n>\ka}=\d_{w_k}(f^n-\ka)_+$, or equivalently
$$
\int_{\bR^3}e^{-\l|w|^2}f^n(t,w)\left(\begin{matrix}1\\ w_j\\ -w_i\end{matrix}\right)q_{ij}^{f^n}(t,v,w)dw+S(t,v)=\Ga_{ji}[\l,f^n(t,\cdot)]\left(\begin{matrix}Z_{ij}\\ Z_i\\ Z_j\end{matrix}\right)\,.
$$
We have denoted
$$
\Ga_{ji}[\l,f^n(t,\cdot)]:=\int_{\bR^3}e^{-\l|w|^2}f^n(t,w)\left(\begin{matrix}1 &w_j &-w_i\\ w_j &w_j^2 &-w_jw_i\\ -w_i &-w_iw_j &w_i^2\end{matrix}\right)dw
$$
and
$$
\ba
S(t,v):=&\int_{\bR^3}e^{-\l|w|^2}\!\left(\begin{matrix}1\\ w_j\\ -w_i\end{matrix}\right)\left|\begin{matrix}\d_{w_j}&(v_j-w_j)\\ \\ \d_{w_i}&(v_i-w_i)\end{matrix}\right|(f^n(t,w)-\ka)_+dw
\\
=&\int_{\bR^3}(f^n(t,w)-\ka)_+\left|\begin{matrix}(v_j-w_j)&\d_{w_j}\\ \\ (v_i-w_i)&\d_{w_i}\end{matrix}\right|e^{-\l|w|^2}\!\left(\begin{matrix}1\\ w_j\\ -w_i\end{matrix}\right)dw
\\
=&\int_{\bR^3}(f^n(t,w)-\ka)_+e^{-\l|w|^2}\left(\begin{matrix}2\l(v_iw_j-v_jw_i)\\ 2\l(v_iw_j-v_jw_i)w_j+w_i-v_i\\ 2\l(v_jw_i-v_iw_j)w_i+w_j-v_j\end{matrix}\right)dw\,.
\ea
$$
By Cramer's formula
$$
\ba
\Det\Ga_{ji}[\l,f^n(t,\cdot)]Z_i
\\
=\Det\int e^{-\l|w|^2}f^n(t,w)\left[\begin{matrix}1 &q_{ij}^{f^n}(t,v,w) &-w_i\\ w_j &w_jq_{ij}^{f^n}(t,v,w) &-w_jw_i\\ -w_i &-w_iq_{ij}^{f^n}(t,v,w) &w_i^2\end{matrix}\right]dw
\\
+\Det\!\!\!\int\!\!\!e^{-\l|w|^2}\!\left[\begin{matrix}f^n(t,w)\! &\!\!\!2\l v_i\wedge w_j(f^n(t,w)\!-\!\ka)_+\! &\!\!\!-w_if^n(t,w)
\\ w_j f^n(\!t,w\!)&\!\!\!(\!2\l w_j(\!v_i\!\wedge\! w_j\!)\!+\!w_i\!-\!v_i\!)(\!f^n(\!t,w\!)\!\!-\!\!\ka\!)_+ &\!\!\!-w_jw_if^n(\!t,w\!)\\ 
-w_if^n(\!t,w\!) &\!\!\!(\!2\l w_i(\!v_j\!\wedge\!w_i\!)\!+\!w_j\!-\!v_j\!)(\!f^n(\!t,w\!)\!\!-\!\!\ka\!)_+&\!\!\!w_i^2f^n(\!t,w\!)\end{matrix}\right]\!dw
\ea
$$
with the notation $a_i\wedge b_j:=a_ib_j-a_jb_i=-b_j\wedge a_i$. Using the lower bound for $\Det\Ga_{ji}[\l,f^n(t,\cdot)]$ obtained in Lemma 2 of \cite{DesviJFA}, one finds that
$$
\ba
|Z_i|\le &6C_D(M_2(t,f^n)^2,H(f_{in}))M_2(t,f^n)^2
\\
&\times\Bigg(\int_{\bR^3}f^n(t,w)q_{ij}^{f^n}(t,v,w)(1+|w|)e^{-\l|w|^2}dw
\\
&+\left(1+\tfrac1{\sqrt{2e\l}}\right)\max(1,2\l)(1+|v|)\int_{\bR^3}(f^n(t,w)-\ka)_+dw\Bigg)\,.
\ea
$$
Therefore
$$
\ba
4\int_{\bR^3}&\frac{|\d_{v_i}\sqrt{f^n(t,v)}|^2}{(1+|v|)^3}\indc_{f^n(t,v)>\ka}dv=\int_{\bR^3}\frac{f^n(t,v)|Z_i|^2}{(1+|v|)^3}dv
\\
\le&72C_D(M_2(t,f^n)^2,H(f_{in}))^2M_2(t,f^n)^4
\\
&\times\Bigg(\left(1+\tfrac1{\sqrt{e\l}}\right)^2\int_{\bR^3}\frac{f^n(t,v)dv}{(1+|v|)^3}\left(\int_{\bR^3}f^n(t,w)q_{ij}^{f^n}(t,v,w)e^{-\l|w|^2/2}dw\right)^2
\\
&+\left(1+\tfrac1{\sqrt{2e\l}}\right)^2\max(1,4\l^2)\int_{\bR^3}\frac{f^n(t,v)dv}{1+|v|}\left(\int_{\bR^3}(f^n(t,w)-\ka)_+dw\right)^2\Bigg)
\\
\le&72C_D(M_2(t,f^n)^2,H(f_{in}))^2M_2(t,f^n)^4\left(1+\tfrac1{\sqrt{e\l}}\right)^2\int_{\bR^3}f^n_{in}(v)dv
\\
&\times\Bigg(\iint_{\bR^3\times\bR^3}\frac{|v-w|^3e^{-\l|w|^2}}{(1+|v|)^3}\frac{|q_{ij}^{f^n}(t,v,w)|^2}{|v-w|^3}f^n(t,v)f^n(t,w)dvdw
\\
&+\max(1,4\l^2)\int_{\bR^3}f^n_{in}(v)dv\int_{\bR^3}(f^n(t,w)-\ka)_+dw\Bigg)\,.
\ea
$$
Since
$$
\frac{|v-w|^3e^{-\l|w|^2}}{(1+|v|)^3}\le4\frac{(|v|^3+|w|^3)e^{-\l|w|^2}}{(1+|v|)^3}\le4\frac{|v|^3+(3/2e\l)^{3/2}}{(1+|v|)^3}\le4\max(1,\tfrac3{2e\l})^{3/2}\,,
$$
we conclude that
$$
\ba
\int_{\bR^3}\frac{|\d_{v_i}\sqrt{f^n(t,v)}|^2}{(1+|v|)^3}\indc_{f^n(t,v)>\ka}dv
\\
\le18C_D(M_2(t,f^n)^2,H(f_{in}))^2M_2(t,f^n)^4M_0(f^n)\left(1+\tfrac1{\sqrt{e\l}}\right)^2
\\
\times\Bigg(4\max(1,\tfrac3{2e\l})^{3/2}\iint_{\bR^3\times\bR^3}\frac{|q_{ij}^{f^n}(t,v,w)|^2}{|v-w|^3}f^n(t,v)f^n(t,w)dvdw
\\
+\max(1,4\l^2)\int_{\bR^3}f^n_{in}(v)dv\int_{\bR^3}(f^n(t,w)-\ka)_+dw\Bigg)&\,.
\ea
$$
This concludes the proof of Lemma \ref{L-CoercH+}.


\section{Proof of Proposition \ref{P-DG}}\lb{S-DG}


\subsection{Replacing the localized entropy with Lebesgue norms}


\begin{Lem} Set $\mu(r):=\min(r,r^2)$ for each $r\ge 0$. Let $f$ be a suitable solution of the Landau equation, for some $q\in(1,2)$ and some negligible $\cN\subset\bR_+$. Denote
$$
f^+_\ka(t,v):=\mu((f(t,v)^{1/q}-\ka^{1/q})_+)\quad\text{ for all }\ka\ge 1\,.
$$
Then, for each $\io>0$, there exists $C(q,\io)>0$ such that, for all $\ka\in[1,2]$ and all $t_1<t_2\in\bR_+\setminus\cN$
\be\lb{Ineq2SuitSol}
\ba
\tfrac12c_h\int_{\bR^3}f^+_\ka(t_2,v)^qdv+\tfrac14C'_E\int_{t_1}^{t_2}\left(\int_{\bR^3}|\grad_vf^+_\ka(t,v)|^qdv\right)^{2/q}dt
\\
\le C(q,\io)\int_{\bR^3}f^+_\ka(t_1,v)^{q(1+\io)}+\ka^{1+\io}\indc_{f^+_\ka(t_1,v)>0})dv
\\
+2^q\ka\int_{t_1}^{t_2}\int_{\bR^3}\left(f^+_\ka(t,v)^q+2\ka\indc_{f^+_\ka(t,v)>0})\right)dvdt&\,.
\ea
\ee
\end{Lem}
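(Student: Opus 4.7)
The plan is to translate the truncated entropy inequality \eqref{SuitSolIneq} into the target inequality \eqref{Ineq2SuitSol} via four pointwise comparison estimates relating the quantities $h_+(f/\ka)$, $(f^{1/q}-\ka^{1/q})_+$, and $(f-\ka)_+$ from \eqref{SuitSolIneq} to the truncated variable $f^+_\ka=\mu((f^{1/q}-\ka^{1/q})_+)$. Each comparison is elementary and uniform in $\ka\in[1,2]$, after which \eqref{Ineq2SuitSol} will follow by term-by-term substitution into \eqref{SuitSolIneq}.

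The four pointwise bounds I would establish, valid for all $\ka\in[1,2]$ and a.e.\ $v\in\bR^3$, are the following. \textbf{(i)} There exists $c_h=c_h(q)>0$ such that $c_h (f^+_\ka)^q\le \ka h_+(f/\ka)$. Setting $y:=f/\ka$ and writing $f^+_\ka=\mu(\ka^{1/q}(y^{1/q}-1)_+)$, I would verify this separately on $\{\ka^{1/q}(y^{1/q}-1)_+\le 1\}$ via the asymptotics $(f^+_\ka)^q\asymp \ka^2(y-1)^{2q}/q^{2q}$ versus $\ka h_+(y)\asymp \ka(y-1)^2/2$ near $y=1$ (the former vanishing strictly faster since $2q>2$), and on the complementary set via $\ka h_+(y)\gtrsim \ka y\ln y$ for large $y$ together with uniform positivity on bounded intermediate regions. \textbf{(ii)} Conversely, $\ka h_+(f/\ka)\le C(q,\io)\bigl((f^+_\ka)^{q(1+\io)}+\ka^{1+\io}\indc_{f^+_\ka>0}\bigr)$: this follows from the elementary inequality $y\ln y\le y^{1+\io}/(\io e)$ on $[1,\infty)$, combined with the convexity estimate $f^{1+\io}\le 2^{q(1+\io)-1}\bigl((f^{1/q}-\ka^{1/q})_+^{q(1+\io)}+\ka^{1+\io}\bigr)$, and with the elementary identity $(f^{1/q}-\ka^{1/q})_+^p\le (f^+_\ka)^p+\indc_{f>\ka}$ valid for all $p\ge 0$ (since $\mu(r)=r$ for $r\ge 1$ and $r^p\le 1$ for $r\in[0,1]$). \textbf{(iii)} Analogously, $(f-\ka)_+\le 2^{q-1}(f^+_\ka)^q+2^q\ka\,\indc_{f^+_\ka>0}$, from the same convexity argument applied with exponent $q$. \textbf{(iv)} The gradient comparison $|\grad_v f^+_\ka|\le 2|\grad_v(f^{1/q}-\ka^{1/q})_+|$ a.e., via the Stampacchia chain rule since $\mu$ is $2$-Lipschitz (with $\mu'(r)=2r$ on $[0,1]$ and $\mu'(r)=1$ on $[1,\infty)$).

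Plugging these into \eqref{SuitSolIneq} produces \eqref{Ineq2SuitSol}: bound (i) at time $t_2$ controls $H_+(f(t_2,\cdot)|\ka)$ from below by $c_h\int_{\bR^3}f^+_\ka(t_2,v)^q\,dv$; (iv) delivers the dissipation lower bound $(\int_{\bR^3}|\grad_v(f^{1/q}-\ka^{1/q})_+|^q\,dv)^{2/q}\ge \tfrac14 (\int_{\bR^3}|\grad_v f^+_\ka|^q\,dv)^{2/q}$ (the factor $2^{q\cdot 2/q}=4$ being absorbed); (ii) at time $t_1$ controls $H_+(f(t_1,\cdot)|\ka)$ from above by the first term on the right-hand side of \eqref{Ineq2SuitSol}; and (iii), integrated in time, controls $2\ka\int_{t_1}^{t_2}\int_{\bR^3}(f-\ka)_+\,dv\,dt$ by the second term on the right-hand side of \eqref{Ineq2SuitSol}. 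The prefactor $\tfrac12$ on $c_h$ provides cosmetic slack that may be absorbed into $C(q,\io)$.

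The only genuinely delicate step is (i), because the desired pointwise inequality crosses a piecewise boundary at $\ka^{1/q}(y^{1/q}-1)=1$ and must accommodate distinct scaling behaviors at $y=1$ and at $y\to\infty$, uniformly in $\ka\in[1,2]$. It ultimately reduces, however, to showing that an explicit continuous positive function of $(y,\ka)$ is bounded below on $[1,\infty)\times[1,2]$, which is routine. The remaining bounds (ii)--(iv) are one-step applications of standard convexity and Lipschitz inequalities.
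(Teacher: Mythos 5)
Your proposal is correct and follows essentially the same route as the paper: the paper's proof consists precisely of your four pointwise comparisons --- the lower bound \eqref{IntMu<H+} (obtained there by chaining $\mu((r^{1/q}-\ka^{1/q})_+^q)\le\mu((r-\ka)_+)\le 2\ka\mu((r/\ka-1)_+)$ with the elementary inequality $c_h\mu((r-1)_+)\le h_+(r)$ of \eqref{Mu<h}, rather than by your direct asymptotic verification, but to the same effect), the upper bound \eqref{H+<mu} via $h_+(r)\le C_\io(r-1)_+^{1+\io}$ together with convexity of $z\mapsto z^q$, and the gradient bound \eqref{IntGradMu<} from $0\le\mu'\le 2$ --- substituted term by term into \eqref{SuitSolIneq}. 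The only divergences are in how the one-variable elementary inequalities are checked, which is immaterial.
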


\begin{proof} There exists $c_h>0$, and for each $\io>0$, there exists $C_\io>0$ such that
\be\lb{Mu<h}
c_h\mu((r-1)_+)\le h_+(r)\le C_\io(r-1)_+^{1+\io}\,,\quad\text{ for all }r\ge 0\,.
\ee
Thus, for each $\ka\in[1,2]$, since the functions $\mu$ and $z\mapsto(r^{1/q}-\ka^{1/q}+z)^q-z^q$ for $r>\ka$ are nondecreasing on $\bR_+$, one has
$$
\tfrac12c_h\mu((r^{1/q}-\ka^{1/q})^q_+)\le\tfrac12c_h\mu((r-\ka)_+)\le c_h\ka\mu((r/\ka -1)_+)\le c_h\ka h_+(r/\ka-1)\,,
$$
and since $\mu(r^q)=\mu(r)^q$,
\be\lb{IntMu<H+}
\tfrac12c_h\int_{\bR^3}\mu((f(t,v)^{1/q}-\ka^{1/q})_+)^qdv\le H_+(f(t,\cdot)|\ka)\,.
\ee
On the other hand, by convexity of $z\mapsto z^q$ on $\bR_+^*$
$$
(r-\ka)_+\le 2^{q-1}(r^{1/q}-\ka^{1/q})_+^q+(2^{q-1}-1)\ka\indc_{r>\ka}\le 2^{q-1}(\mu((r^{1/q}-\ka^{1/q})_+)^q+2\ka\indc_{r>\ka})\,,
$$
where the second inequality follows from observing that $y\indc_{f>\ka}=\mu(y\indc_{f>\ka})$ if $y\ge 1$, while $y\indc_{f>\ka}\le\indc_{f>\ka}\le\ka\indc_{f>\ka}$ otherwise. Hence
\be\lb{H+<mu}
\ba
H_+(f(t,\cdot)|\ka)\le C_\io\ka^{-\io}\int_{\bR^3}(f(t,v)-\ka)_+^{1+\io}dv
\\
\le 2^{(q-1)(1+\io)}C_\io\int_{\bR^3}(\mu((f(t,v)^{1/q}-\ka^{1/q})_+)^q+2\ka\indc_{f(t,v)>\ka})^{1+\io}dv
\\
\le C(q,\io)\int_{\bR^3}(\mu((f(t,v)^{1/q}-\ka^{1/q})_+)^{q(1+\io)}+\ka^{1+\io}\indc_{f(t,v)>\ka})dv&\,,
\ea
\ee
with $C(q,\io):=2^{(1+\io)(\io+q-1)}C_\io$. Finally, 
\be\lb{IntGradMu<}
\int_{\bR^3}|\grad_v(\mu((f(t,v)^{1/q}-\ka^{1/q})_+)|^qdv\le 2^q\int_{\bR^3}|\grad_v((f(t,v)^{1/q}-\ka^{1/q})_+)|^qdv\,,
\ee
since $0\le\mu'(r)\le 2$ for all $r\ge 0$.
\end{proof}


\subsection{The De Giorgi Method}


For each integer $k\ge 0$, set
$$
t^k:=\tfrac12-\tfrac14\cdot 2^{-k}\,,\quad\text{ and }\quad\ka_k:=(1+(2^{1/q}-1)(1-2^{-k}))^q\,.
$$
In this proof, we abuse the notation and write for simplicity
$$
f^+_k:=f^+_{\ka_k}\,.
$$
Set
$$
A_k:=\Esssup_{t^k\le t\le 1}\tfrac{c_h}2\int_{\bR^3}f^+_k(t,v)^qdv+\tfrac14{C'_E}\int_{t^k}^{1}\left(\int_{\bR^3}|\grad_vf^+_k(t,v)|^qdv\right)^{2/q}dt\,.
$$

\subsubsection{Step 1: the recurrence inequality}

Writing \eqref{Ineq2SuitSol} for $t_1:=\th\in[t^k,t^{k+1}]\setminus\cN$ and each $t_2\in[t^{k+1},1]\setminus\cN$ and letting $t_2\to 1^-$ in the integrals over $[t_1,t_2]$ implies that
$$
\ba
A_{k+1}\le 2\max\left(\Esssup_{t^{k+1}\le t\le 1}\tfrac{c_h}2\int_{\bR^3}f^+_{k+1}(t,v)^qdv,\tfrac{C'_E}4\int_{t^{k+1}}^{1}\left(\int_{\bR^3}|\grad_vf^+_{k+1}(t,v)|^qdv\right)^{2/q}dt\right)
\\
\le 2C(q,\io)\int_{\bR^3}f^+_{k+1}(\th,v)^{q(1+\io)}+\ka_{k+1}^{1+\io}\indc_{f^+_{k+1}(\th,v)>0})dv
\\
+2^{q+1}\ka_{k+1}\int_{\th}^1\int_{\bR^3}\left(f^+_{k+1}(t,v)^q+2\ka_{k+1}\indc_{f^+_{k+1}(t,v)>0})\right)dvdt&\,.
\ea
$$
Averaging the left- and rightmost sides of this inequality in $\th\in[t^k,t^{k+1}]$ shows that
$$
A_{k+1}\!\!\le\! 2^{k+5}(C(q,\io)+\!2^{q+1})\!\!\!\int_{t^k}^1\!\!\!\int_{\bR^3}\!\left(f^+_{k+1}(\th,v)^{q(1+\io)}\!\!+\!\!f^+_{k+1}(\th,v)^q\!+\!2\ka_{k+1}^{1+\io}\indc_{f^+_{k+1}(\th,v)>0}\right)\!dvd\th
$$
since $t_k-t_{k+1}=2^{-k-4}$ and $\ka_{k+1}\le 2$. Here comes the nonlinearization part of DeGiorgi's argument. First $f^+_{k+1}\le f^+_k$, and
$$
f^+_{k+1}>0\implies f>\ka_{k+1}\implies f^+_k>\mu(\ka_{k+1}^{1/q}-\ka_k^{1/q})=\mu((2^{1/q}-1)\cdot 2^{-k-1})\ge\frac{4^{-k-1}}{(1+\sqrt{2})^2}\,,
$$
so that
$$
\indc_{f^+_{k+1}>0}\le\indc_{f^+_k>4^{-k-3}}\,,\quad\text{ and hence }\quad\indc_{f^+_{k+1}>0}\le 4^{(k+3)p_0}(f^+_k)^{p_0}\,.
$$
Thus
$$
\ba
(f^+_{k+1})^{q(1+\io)}+(f^+_{k+1})^q+\ka_{k+1}^{1+\io}\indc_{f^+_{k+1}>0}
\\
\le(f^+_k)^{q(1+\io)}+4^{-(k+3)q}(4^{k+3}f^+_k)^{q(1+\io)}+2^{1+\io}(4^{k+3}f^+_k)^{q(1+\io)}\,,
\ea
$$
so that
$$
A_{k+1}\!\!\le\! 2^{k+5}(1+4^{(k+3)q\io}+2^{1+\io}4^{(k+3)q(1+\io)})(C(q,\io)\!\!+\!2^{q+1})\!\!\int_{t^k}^1\!\!\int_{\bR^3}\!f^+_k(\th,v)^{q(1+\io)}dvd\th.
$$
By H\"older's inequality and Sobolev's embedding (Theorem IX.9 in \cite{Brezis})
$$
\ba
\|f^+_k\|_{L^r([t_1,t_2]\times\bR^3)}\le&\|f^+_k\|_{L^\infty(t_1,t_2;L^q(\bR^3))}^{1-\a}\|f^+_k\|_{L^2(t_1,t_2;L^{q^*}(\bR^3))}^\a
\\
\le&C_S(q,3)^\a\|f^+_k\|_{L^\infty(t_1,t_2;L^q(\bR^3))}^{1-\a}\|\grad_vf^+_k\|_{L^2(t_1,t_2;L^q(\bR^3))}^\a
\ea
$$
with
$$
\frac1r=\frac{\a}2=\frac{1-\a}q+\frac{\a}{q^*}\text{ and }\frac1{q^*}=\frac1q-\frac13\,,\quad\text{ so that }r=\tfrac53q\,,\quad\text{ and }\quad\a=\frac{6}{5q}\,.
$$
Notice that we use the restriction $q>\tfrac65$ at this point. With $\io=\tfrac23$, one finds
$$
\ba
\int_{t^k}^1\!\!\int_{\bR^3}\!f^+_{k+1}(\th,v)^{\frac{5q}{3}}dvd\th\le &C_S(q,3)^2\|f^+_k\|_{L^\infty(t_k,1;L^q(\bR^3))}^{\frac{5q}{3}-2}\|\grad_vf^+_k\|_{L^2(t_k,1;L^q(\bR^3))}^2
\\
\le &C_S(q,3)^2(\tfrac2{c_h})^{\frac53-\frac2q}\tfrac{4}{C_E}A_k^{\frac83-\frac2q}\,,
\ea
$$
so that
$$
A_{k+1}\le M\L^kA_k^\b\,,\qquad k\ge 0\,,
$$
with 
$$
\b:=\frac83-\frac2q>1\quad\text{ and }\quad\L:=2\cdot 4^{5q/3}\,,
$$
while
$$
M:=2^{5+10q}\cdot 3\cdot (C(q,\tfrac23)\!\!+\!2^{q+1})C_S(q,3)^2\left(\frac2{c_h}\right)^{\frac53-\frac2q}\frac{4}{C_E}\,.
$$
An easy induction shows that, for all $k\ge 1$, one has
$$
A_k\le M^{-\frac1{\b-1}}\L^{-\frac1{(\b-1)^2}}\L^{\frac{k}{\b-1}}\left(M^{\frac1{\b-1}}\L^{\frac1{(\b-1)^2}}A_0\right)^{\b^k}\,.
$$
Hence
\be\lb{CondA0}
A_0<M^{-\frac1{\b-1}}\L^{-\frac1{(\b-1)^2}}\implies A_k\to 0\text{ as }k\to+\infty\,,
\ee
and by Fatou's lemma
$$
\int_{1/2}^1\int_{\bR^3}\mu((f(t,v)^{1/q}-2^{1/q})_+)dv\le\tfrac2{c_h}\varliminf_{k\to\infty}A_k=0\,,
$$ 
which implies in turn that $f(t,v)=0$ for a.e. $(t,v)\in[\tfrac12,1]\times\bR^3$.

\subsubsection{Step 2: initialization}

Write \eqref{SuitSolIneq} for $t_1\in[0,\tfrac14]\setminus\cN$ and $t_2\in[\tfrac14,1]\setminus\cN$, with $\ka=1$. Letting $t_2\to 1^-$ in the integrals over $[t_1,t_2]$ and using \eqref{IntMu<H+}-\eqref{IntGradMu<} shows that
$$
\ba
A_0=&\Esssup_{\frac14\le t\le 1}\tfrac{c_h}{2}\int_{\bR^3}\mu((f(t,v)^{1/q}\!-\!1)_+)^qdv
\\
&+\tfrac{C'_E}4\int_{1/4}^{1}\left(\int_{\bR^3}|\grad_v\mu((f(t,v)^{1/q}\!-\!1)_+)|^qdv\right)^{2/q}dt
\\
\le&2\left(H_+(f(t_1,\cdot)|1)+2\int_{t_1}^1\int_{\bR^3}(f(t,v)-1)_+dvdt\right)\,.
\ea
$$
Averaging in $t_1$ over $[\tfrac18,\tfrac14]$ shows that
$$
A_0\le 16\left(\int_{1/8}^1H_+(f(t_1,\cdot)|1)dt_1+2\int_{1/8}^1\int_{\bR^3}(f(t,v)-1)_+dvdt\right)\,.
$$
Since $(f-1)_+\le(f-\tfrac12)_+$ and
$$
(f-1)_+\le(f-1)_+\indc_{(f-\frac12)_+\ge\frac12}\le 2(f-1)_+(f-\tfrac12)_+\le 2(f-\tfrac12)^2_+\,,
$$
one has, by \eqref{Mu<h},
$$
(f-1)_+\le 2\mu((f-\tfrac12)_+)\le\tfrac1{c_h}h_+(2f)\,.
$$
Hence
$$
\ba
A_0\le 16\left(\int_{1/8}^1H_+(f(t_1,\cdot)|1)dt_1+\tfrac1{c_h}\int_{1/8}^1H_+(f(t,\cdot)|\tfrac12)dt\right)
\\
\le 16(1+\tfrac1{c_h})\int_{1/8}^1H_+(f(t,\cdot)|\tfrac12)dt&\,.
\ea
$$
Together with \eqref{CondA0}, this implies Proposition \ref{P-DG} with
$$
\eta_0:=\frac{c_h^2}{1+c_h}\left(\frac{C'_E}{3(C(q,\tfrac23)\!+\!2^{q+1})C_S(q,3)^2}\right)^{\frac1{\b-1}}2^{-5-\frac{7+12q}{\b-1}-\frac4{(\b-1)^2}}\,.
$$


\section{Proof of Proposition \ref{P-ImprovedDG}}\lb{S-ImprovedDG}


The key idea in the proof of Proposition \ref{P-ImprovedDG} is to apply the De Giorgi local boundedness argument (Proposition \ref{P-DG}) to a scaled suitable solution of \eqref{LC}. As explained in section \ref{S-MR}, the 2-parameter group
of scaling transformations \eqref{ScalLC} leaving \eqref{LC} invariant is not rich enough to contain a subgroup leaving the conserved quantities \eqref{ConsLaw} and the entropy invariant. However, this is unessential for our argument, and 
we only seek to leave the truncated entropy inequality \eqref{SuitSolIneq} invariant. 

\subsection*{Step 1: Scaling solutions to the Landau equation with Coulomb interaction}

Let $f$ be a suitable solution to the Landau equation with Coulomb interaction, satisfying \eqref{SuitSolIneq} for some Lebesgue negligible set $\cN\subset(0,+\infty)$, some $q\in(\tfrac43,2)$, and with a given constant $C'_E>0$. According
to Proposition \ref{P-ExistSSol}, the existence of such a solution is known provided that the initial data $f\rstr_{t=0}=:f_{in}$ satisfies \eqref{Momfin} with $q=\frac{2k}{k+3}$ with $k>6$.

For each integer $n\ge 0$, set $\eps_n:=2^{-n}$ and
$$
f_n(t,v):=\eps_n^\g f(1+\eps_n^\g(t-1),\eps_nv)\,,\qquad\text{ with }\g:=\frac{5q-6}{2q-2}\,.
$$
(Observe that, up to the translation in time, this is precisely the subgroup of scaling transformations \eqref{ScalLCq}). One easily checks that $f_n$ is a H-solution to the Landau equation for each $n\ge 0$, that
$$
\int_{\bR^3}f_n(t,v)dv=\eps_n^{\g-3}\int_{\bR^3}f(1+\eps_n^\g(t-1),V)dV=\int_{\bR^3}f_{in}(V)dV\,,
$$
while
\be\lb{ScalH+}
\ba
H_+(f_n(t,\cdot)|\eps_n^\g\ka)=\eps_n^{\g-3}H_+(f(1+\eps_n^\g(t-1),\cdot)|\kappa)
\\
\int_{t_1}^{t_2}\int_{\bR^3}(f_n(t,v)-\eps_n^\g\ka)_+dvdt=\frac1{\eps_n^\g}\int_{1+\eps^\g_n(t_1-1)}^{1+\eps^3_n(t_2-1)}\int_{\bR^3}(f(T,V)-\ka)_+dVdT\,,
\ea
\ee
and that
\be\lb{ScalTruncProd}
\ba
\int_{t_1}^{t_2}\left(\int_{\bR^3}|\grad_v(f_n(t,v)^{1/q}-(\eps_n^\g\ka)^{1/q}|^qdv\right)^{2/q}dt
\\
=\eps_n^{\g-3}\int_{1+\eps_n^\g(t_1-1)}^{1+\eps_n^\g(t_2-1)}\left(\int_{\bR^3}|\grad_V(f(T,V)^{1/q}-\ka^{1/q}|^qdV\right)^{2/q}dT&\,.
\ea
\ee
Since $f$ satisfies \eqref{SuitSolIneq} for all $\ka\ge 1$ and all $t\in\cN$, the scaling transformations \eqref{ScalH+} and \eqref{ScalTruncProd} imply that
\be\lb{TruncEntrIneq-n}
\ba
H_+(f_n(t_2,\cdot)|\eps_n^\g\ka)+C'_E\int_{t_1}^{t_2}\left(\int_{\bR^3}|\grad_v(f_n(t,v)^{1/q}-(\eps_n^\g\ka)^{1/q}|^qdv\right)^{2/q}dt
\\
\le H_+(f_n(t_1,\cdot)|\eps_n^\g\ka)+2\eps_n^\g\ka\int_{t_1}^{t_2}\int_{\bR^3}(f_n(t,v)-\eps_n^\g\ka)_+dvdt&\,,
\ea
\ee
for all $\ka\ge 1$ and all $t\in\cN_n$, where
\be\lb{N-n}
\cN_n:=\{t\ge 0\hbox{ s.t. }1+\eps_n^\g(t-1)\in\cN\}\,.
\ee
Since $\cN_n$ is the image of the Lebesgue negligible set $\cN$ by an affine transformation, it is Lebesgue negligible. On the other hand $f_n$ satisfies the truncated entropy inequality \eqref{TruncEntrIneq-n} for all $\kappa\ge 1$, or 
equivalently whenever $\ka_n:=\eps_n^\g\ka\ge 2^{-n\g}$. Since $q>\tfrac43$, one has $\g>0$, so that $[1,+\infty)\subset[2^{-n\g},\infty)$. 

This proves that $f_n$ is a suitable solution to the Landau equation satisfying \eqref{SuitSolIneq} with the same Lebesgue exponent $q\in(\tfrac43,2)$, the same constant $C'_E$, and with the new Lebesgue-negligible set $\cN_n$.

From now on, and until the very end of this proof, we forget completely the suitable solution $f$, and consider only the sequence of scaled solutions $f_n$. The goal is to show that, by choosing $\eta_1$ small enough, one can find some 
$n$ large enough so that the scaled solution $f_n$ satisfies the assumption of Proposition \ref{P-DG}. Applying Proposition \ref{P-DG} shows that this $f_n$ is locally bounded near $t=1$, which implies in turn that $f$ is locally bounded
(by a very large number, whose size is unessential) near $t=1$ (here again, the size of the domain on which $f$ is bounded is of no interest for the partial regularity result).

\subsubsection*{Step 2: Replacing the local entropy with Lebesgue norms}

In this section we seek to apply \eqref{Ineq2SuitSol} to each suitable solution $f_n$ with truncation parameter $\ka_n=\eps_n^\g\ka=1$. 

Thus we define $F_n(t,v):=\mu((f_n(t,v)^{1/q}-1)_+)$ for each integer $n\ge 0$, and for a.e. $(t,v)\in[0,+\infty)\times\bR^3$. Then
\be\lb{IntFn<1}
\ba
\int_{\bR^3}F_n(t,v)^qdv\le&\int_{\bR^3}(f_n(t,v)^{1/q}-1)_+^qdv
\\
\le&\int_{\bR^3}f_n(t,v)dv=\eps_n^{\g-3}\int_{\bR^3}f_{in}(v)dv=\eps_n^{\g-3}\,,
\ea
\ee
while
$$
\ba
\int_0^1\left(\int_{\bR^3}|\grad_vF_n(t,v)|^qdv\right)^{2/q}dt
\\
=\int_0^1\left(\int_{\bR^3}|\mu'((f_n(t,v)^{1/q}-1)_+)\grad_v(f_n(t,v)^{1/q}-1)_+|^qdv\right)^{2/q}dt
\\
\le 4\int_0^1\left(\int_{\bR^3}|\grad_v(f_n(t,v)^{1/q}-1)_+|^qdv\right)^{2/q}dt
\\
=4\eps_n^{\g-3}\int_{1-\eps_n^\g}^1\left(\int_{\bR^3}|\grad_V(f(T,V)^{1/q}-\eps_n^{-g/q})_+|^qdV\right)^{2/q}dT<8\eta_1\,,
\ea
$$
for all $n\ge N$ large enough. 

Writing \eqref{Ineq2SuitSol} for each $f_n$, replacing the truncation parameter $\kappa$ in \eqref{Ineq2SuitSol} with $\ka_n=1$, shows that, for $t_1<t_2\in[0,+\infty)\setminus\cN_n$ and all $\io>0$, one has
$$
\ba
\tfrac{c_h}2\int_{\bR^3}F_{n+1}(t_2,v)^qdv+\tfrac{C'_E}{4}\int_{t_1}^{t_2}\left(\int_{\bR^3}|\grad_vF_{n+1}(t,v)|^qdv\right)^{2/q}dt
\\
\le C(q,\io)\int_{\bR^3}\left(F_{n+1}(t_1,v)^{q(1+\io)}+\indc_{F_{n+1}(t_1,v)>0}\right)dv
\\
+2^q\int_{t_1}^{t_2}\int_{\bR^3}\left(F_{n+1}(t,v)^q+2\indc_{F_{n+1}(t,v)>0}\right)dvdt&\,.
\ea
$$
Taking the $\Esssup$ of the left hand side for $\tfrac12<t_2<1$ and averaging the right hand side in $t_1\in[0,\tfrac12]$ shows that
\be\lb{supFn+1<}
\ba
\Esssup_{\frac12<t_2<1}\int_{\bR^3}F_{n+1}(t_2,v)^qdv
\\
\le\frac{4C(q,\io)}{c_h}\int_0^{1/2}\int_{\bR^3}\left(F_{n+1}(t_1,v)^{q(1+\io)}+\indc_{F_{n+1}(t_1,v)>0}\right)dvdt_1
\\
+\frac{2^{q+2}}{c_h}\int_0^1\int_{\bR^3}\left(F_{n+1}(t,v)^q+2\indc_{F_{n+1}(t,v)>0}\right)dvdt
\\
\le\tfrac{4C(q,\io)+2^{q+3}}{c_h}\int_0^1\int_{\bR^3}\left(F_{n+1}(t,v)^{q(1+\io)}+F_{n+1}(t,v)^q+\indc_{F_{n+1}(t,v)>0}\right)dvdt&\,.
\ea
\ee

\subsection*{Step 3: Using the Sobolev embedding}

We recall that 
$$
\|\phi\|_{L^{q^*}(\bR^3)}\le C_S(q,3)\|\grad\phi\|_{L^q(\bR^3)}\quad\text{ with }\quad q^*=\frac{3q}{3-q}
$$ 
for all $q\in[1,3)$ and all $\phi\in W^{1,q}(\bR^3)$ (see Theorem IX.9 in \cite{Brezis}). Thus, for each $\io\ge 0$,
$$
\ba
\int_0^1\int_{\bR^3}F_{n+1}(t,v)^{q(1+\io)}dvdt
\\
\le\|F_{n+1}^{q(1+\io)}\|_{L^\frac{2}{q(1+\io)}(0,1;L^\frac{q^*}{q(1+\io)}(\bR^3))}\|\indc_{F_{n+1}>0}\|_{L^\frac{2}{2-q(1+\io)}(0,1;L^\frac{3}{3-(3-q)(1+\io)}(\bR^3))}
\\
\le\|F_{n+1}\|^{q(1+\io)}_{L^2(0,1;L^{q^*}(\bR^3))}\Esssup_{0<t<1}|\{v\in\bR^3\text{ s.t. }F_{n+1}(t,v)>0\}|^\frac{q-(3-q)\io}{3}
\\
\le C_S(q,3)^{q(1+\io)}\|\grad_vF_{n+1}\|^{q(1+\io)}_{L^2(0,1;L^q(\bR^3))}\Esssup_{0<t<1}|\{v\in\bR^3\text{ s.t. }F_{n+1}(t,v)>0\}|^\frac{q-(3-q)\io}{3}
\\
\le (8C_S(q,3))^{q(1+\io)}\eta_1^\frac{q(1+\io)}2\Esssup_{0<t<1}|\{v\in\bR^3\text{ s.t. }F_{n+1}(t,v)>0\}|^\frac{q-(3-q)\io}{3}&\,.
\ea
$$
Observe that
$$
\ba
F_{n+1}(t,v)>0\implies f_{n+1}(t,v)>1
\\
\implies(f_{n}(1+2^{-\g}(t-1),v)^{1/q}-1)_+>2^{\g/q}-1
\\
\implies F_n(1+2^{-\g}(t-1),\tfrac12v)^q>\mu(2^{\g/q}-1)^q&\,.
\ea
$$
Hence
$$
|\{v\in\bR^3\text{ s.t. }F_{n+1}(t,v)>0\}|\le\frac{8}{\mu(2^{\g/q}-1)^q}\int_{\bR^3}F_n(1+2^{-\g}(t-1),V)^qdV\,,
$$
so that
\be\lb{Fn+1qio<}
\ba
\int_0^1\int_{\bR^3}F_{n+1}(t,v)^{q(1+\io)}dvdt
\\
\le(8C_S(q,3))^{q(1+\io)}\left(\tfrac{8}{\mu(2^{\g/q}-1)^q}\right)^\frac{q-(3-q)\io}3\eta_1^\frac{q(1+\io)}2\Esssup_{1-2^{-\g}<T<1}\left(\int_{\bR^3}F_n(T,V)^qdV\right)^\frac{q-(3-q)\io}3&\,.
\ea
\ee
Likewise, setting $\io=0$ in the inequality above
\be\lb{Fn+1q<}
\ba
\int_0^1\int_{\bR^3}F_{n+1}(t,v)^qdvdt
\\
\le(8C_S(q,3))^q\left(\tfrac{8}{\mu(2^{\g/q}-1)^q}\right)^\frac{q}3\eta_1^\frac{q}2\Esssup_{1-2^{-\g}<T<1}\left(\int_{\bR^3}F_n(T,V)^qdV\right)^\frac{q}3&\,.
\ea
\ee
Finally
$$
\indc_{F_{n+1}(t,v)>0}\le\frac{F_n(1+2^{-\g}(t-1),\tfrac12v)^q}{\mu(2^{\g/q}-1)^q}\indc_{F_{n+1}(t,v)>0}
$$
so that
\be\lb{1Fn+1<}
\ba
\int_0^1\int_{\bR^3}\indc_{F_{n+1}(t,v)>0}dvdt\le\frac{2^{\g+3}}{\mu(2^{\g/q}-1)^q}\int_{7/8}^1\int_{\bR^3}F_n(T,V)^qdVdT
\\
\le\frac{2^{\g+3+4q}C_S(q,3)^q}{\mu(2^{\g/q}-1)^{q+\frac{q^2}3}}\eta_1^\frac{q}2\Esssup_{1-2^{-\g}<T<1}\left(\int_{\bR^3}F_{n-1}(T,V)^qdV\right)^\frac{q}3&\,.
\ea
\ee

Henceforth, we assume that $q\in(\tfrac43,2)$, so that $\g>1$, which implies in turn that $1-2^{-\g}>\tfrac12$. Putting together \eqref{supFn+1<}, \eqref{Fn+1qio<}, \eqref{Fn+1q<}, and \eqref{1Fn+1<} shows that
\be\lb{IterDGImpr}
\ba
\Esssup_{\frac12<t_2<1}\int_{\bR^3}F_{n+1}(t_2,v)^qdv\le&D(q,\io)\eta_1^\frac{q}2\max\left(1,\Esssup_{\frac12<T<1}\int_{\bR^3}F_n(T,V)^qdV\right)^\frac{q}3
\\
&+D(q,\io)\eta_1^\frac{q}2\max\left(1,\Esssup_{\frac12<T<1}\int_{\bR^3}F_{n-1}(T,V)^qdV\right)^\frac{q}3&\,,
\ea
\ee
with
$$
D(q,\io):=\tfrac{4C(q,\io)+2^{q+3}}{c_h}\max\left(\tfrac{2^{(4q+\io)}C_S(q,3)^{q(1+\io)}}{\mu(2^{\g/q}-1)^{\frac{q^2(1+\io)}3-q\io}}
+\tfrac{2^{4q}C_S(q,3)^q}{\mu(2^{\g/q}-1)^{\frac{q^2}3}},\tfrac{2^{\g+3+4q}C_S(q,3)^q}{\mu(2^{\g/q}-1)^{q+\frac{q^2}3}}\right)\,.
$$

\subsection*{Step 4: The induction argument}

\begin{Lem} Let $X_n$ be a sequence of positive numbers such that 
\be\lb{2LevIter}
X_{n+1}<\rho(\max(1,X_n)^\a+\max(1,X_{n-1})^\a)\,,\qquad n\ge 1\,,
\ee
where $0<\rho<\tfrac12$ and $X_0,X_1\le M$ with $M\ge 1$. Then
$$
X_{2n}\text{ and }X_{2n+1}\le \max\left(2\rho,(2\rho)^\frac{1-\a^n}{1-\a}M^{\a^n}\right)\qquad n\ge 1\,.
$$
\end{Lem}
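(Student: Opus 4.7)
The plan is to introduce the auxiliary one-step sequence $C_n := (2\rho)^{(1-\alpha^n)/(1-\alpha)} M^{\alpha^n}$, which satisfies $C_0 = M$ and the contraction $C_{n+1} = 2\rho\, C_n^\alpha$; the bound claimed by the lemma then reads $B_n := \max(2\rho, C_n)$. For $\alpha \in (0,1)$ (implicit in the application made of the lemma, since $\alpha = q/3 \in (4/9,2/3)$ in \eqref{IterDGImpr}) and $M \geq 1$, a short calculation, splitting according to whether $C_n \geq 1$ or $C_n < 1$, gives the monotonicity $B_{n+1} \leq B_n$; in particular $B_n \leq B_0 = M$ for all $n \geq 0$. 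I would then prove by induction on $n \geq 1$ the joint bound $X_{2n}, X_{2n+1} \leq B_n$.

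The base case $n = 1$ follows from two direct applications of \eqref{2LevIter}: starting from $X_0, X_1 \leq M$ one obtains $X_2 < 2\rho\, M^\alpha = C_1 = B_1$; since $2\rho\, M^\alpha \leq M$ (because $2\rho < 1 \leq M^{1-\alpha}$) also $X_2 \leq M$, so the same inequality then yields $X_3 < 2\rho\, M^\alpha = B_1$. For the inductive step, apply \eqref{2LevIter} at index $2n+1$ with $X_{2n}, X_{2n+1} \leq B_n$:
\[
X_{2n+2} < \rho\bigl(\max(1, X_{2n+1})^\alpha + \max(1, X_{2n})^\alpha\bigr) \leq 2\rho\, \max(1, B_n)^\alpha.
\]
Split into two cases: if $B_n \leq 1$ the right-hand side is at most $2\rho \leq B_{n+1}$; if $B_n > 1$ then $B_n = C_n$ and the right-hand side equals $C_{n+1}$, which is then $\geq 2\rho$ and hence equals $B_{n+1}$. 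Either way $X_{2n+2} \leq B_{n+1} \leq B_n$, and the same argument applied one step further, using $X_{2n+1} \leq B_n$ as well, yields $X_{2n+3} \leq B_{n+1}$, closing the induction.

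The only real obstacle is the bookkeeping forced by the two-level nature of \eqref{2LevIter}: a naive one-step iteration of the form $X_{n+1} \lesssim \max(1, X_n)^\alpha$ would accumulate an extra factor at each step and destroy the geometric decay. Coupling consecutive indices under a single bound $B_n$ and advancing one pair $(X_{2n}, X_{2n+1}) \to (X_{2n+2}, X_{2n+3})$ at a time is what lets the single one-step contraction $C_{n+1} = 2\rho\, C_n^\alpha$ drive the double-exponential factor $M^{\alpha^n}$ that appears in the conclusion; the handling of the $\max(2\rho,\cdot)$ in the definition of $B_n$ then just amounts to the two-case split above.
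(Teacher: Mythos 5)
Your proof is correct and follows essentially the same route as the paper's: an induction advancing the pair $(X_{2n},X_{2n+1})$ to $(X_{2n+2},X_{2n+3})$ under a common bound, with the same two-case split according to whether the quantity $(2\rho)^{(1-\alpha^n)/(1-\alpha)}M^{\alpha^n}$ exceeds $1$ and the same monotonicity of the bound to control the second member of each pair. Your packaging via $C_{n+1}=2\rho\,C_n^\alpha$ and $B_n=\max(2\rho,C_n)$ is just a cleaner notation for what the paper writes out explicitly.
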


\begin{proof}
This is proved by an elementary induction argument. The desired conclusion holds for $n=0$, i.e.
$$
X_0\text{ and }X_1\le M=\max(2\rho,M)\,,\quad\text{ since }M\ge 1\text{ and }2\rho<1\,.
$$
Assume that
$$
X_{2n}\text{ and }X_{2n+1}\le\max(2\rho,(2\rho)^{1+\a+\ldots+\a^{n-1}}M^{\a^n})\,.
$$
If $(2\rho)^{1+\a+\ldots+\a^{n-1}}M^{\a^n}\le 1$, the assumption $2\rho<1$ implies that
$$
X_{2n+2}\le\rho(1^\a+1^\a)=2\rho<1\,,\text{ and }X_{2n+3}\le\rho(\max(1,2\rho)^\a+1^\a)=2\rho<1\,.
$$
If on the other hand $(2\rho)^{1+\a+\ldots+\a^{n-1}}M^{\a^n}>1$, then
$$
\ba
X_{2n+2}\le&\rho\left((2\rho)^{\a(1+\a+\ldots+\a^{n-1})}M^{\a^{n+1}}+(2\rho)^{\a(1+\a+\ldots+\a^{n-1})}M^{\a^{n+1}}\right)
\\
=&2\rho(2\rho)^{\a(1+\a+\ldots+\a^{n-1})}M^{\a^{n+1}}=(2\rho)^{1+\a+\ldots+\a^n}M^{\a^{n+1}}
\\
\le&\max(2\rho,(2\rho)^{1+\a+\ldots+\a^n}M^{\a^{n+1}})\,.
\ea
$$
In particular
$$
\max\left(2\rho,(2\rho)^{1+\a+\ldots+\a^n}M^{\a^{n+1}}\right)\le(2\rho)^{1+\a+\ldots+\a^{n-1}}M^{\a^n}
$$
since 
$$
2\rho<1<(2\rho)^{1+\a+\ldots+\a^{n-1}}M^{\a^n}\,.
$$
Hence
$$
\ba
X_{2n+3}\le&\rho\left((2\rho)^{\a(1+\a+\ldots+\a^{n-1})}M^{\a^{n+1}}+\max\left(1,2\rho,(2\rho)^{1+\a+\ldots+\a^n}M^{\a^{n+1}}\right)^\a\right)
\\
\le&\rho\left((2\rho)^{\a(1+\a+\ldots+\a^{n-1})}M^{\a^{n+1}}+\left((2\rho)^{1+\a+\ldots+\a^{n-1}}M^{\a^{n}}\right)^\a\right)
\\
=&2\rho(2\rho)^{\a(1+\a+\ldots+\a^{n-1})}M^{\a^{n+1}}\le\max\left(2\rho,(2\rho)^{1+\a+\ldots+\a^n}M^{\a^{n+1}}\right)\,.
\ea
$$
since $2\rho<1$ and $M^{\a^{n+1}(\a-1)}\le 1$.
\end{proof}

Henceforth we choose $\io=\tfrac23$, as in the proof of the first De Giorgi type lemma. Choose $\eta_1$ small enough so that
$$
0<\eta_1<(2D(q,\tfrac23))^{-2/q}\,,
$$
and apply the lemma above to the sequence
$$
X_n=\Esssup_{\frac12<t<1}\int_{\bR^3}F_{N+n}(t,v)^qdv\,,
$$
with
$$
\rho:=D(q,\tfrac23)\eta_1^\frac{q}2\,,\quad\a=q/3\,,\quad\text{ and }\quad M:=2^{(N+1)(3-\g)}\,.
$$
Because of \eqref{IterDGImpr}, this sequence $X_n$ satisfies \eqref{2LevIter} for all $n\ge 0$. Writing \eqref{IntFn<1} with $n=N$ and $n=N+1$ shows that 
$$
X_0\le 2^{-N(\g-3)}\le M\quad\text{ and }X_1\le 2^{-(N+1)(\g-3)}\le M
$$
because $3-\g=\frac{q}{2q-2}>1$ since $q\in(\tfrac43,2)$.

Since
$$
(2\rho)^{1+\a+\ldots+\a^n}M^{\a^{n+1}}\to (2\rho)^{\frac1{1-\a}}<2\rho\quad\text{ as }n\to+\infty\,,
$$
there exists $n_0$ such that $n\ge N+n_0$ implies that
$$
\Esssup_{\frac12<t<1}\int_{\bR^3}F_n(t,v)^qdv\le 2\rho=2D(q,\tfrac23)\eta_1^\frac{q}2<1\,.
$$

\subsection*{Step 5: Conclusion}

Using \eqref{H+<mu} shows that 
$$
\ba
\int_0^1H_+(f_{N+n_0+2}(t,\cdot)|1)dt
\\
\le C(q,\tfrac23)\int_0^1\int_{\bR^3}(F_{N+n_0+2}(t,v)^{5q/3}+\indc_{F_{N+n_0+2}(t,v)>0})dvdt&\,.
\ea
$$
At this point, we use \eqref{Fn+1qio<} and \eqref{1Fn+1<} to bound the right hand side of the inequality above, which leads to
$$
\ba
\int_0^1H_+(f_{N+n_0+2}(t,\cdot)|1)dt
\\
\le C(q,\tfrac23)\tfrac{2^{\frac{20q-6}3}}{C_S(q,3)^\frac{5q}3\mu(2^{\g/q}-1)^\frac{q(5q-6)}9}\eta_1^\frac{5q}6\Esssup_{1-2^{-\g}<T<1}\left(\int_{\bR^3}F_{N+n_0+1}(T,V)^qdV\right)^\frac{5q-6}9
\\
+C(q,\tfrac23)\tfrac{2^{\g+3+4q}C_S(q,3)^q}{\mu(2^{\g/q}-1)^{q+\frac{q^2}3}}\eta_1^\frac{q}2\Esssup_{1-2^{-\g}<T<1}\left(\int_{\bR^3}F_{N+n_0}(T,V)^qdV\right)^\frac{q}3
\\
\le C(q,\tfrac23)D(q,\tfrac23)\eta_1^\frac{q}2\max\left(1,\Esssup_{\frac12<T<1}\int_{\bR^3}F_{N+n_0+1}(T,V)^qdV\right)^\frac{q}3
\\
+ C(q,\tfrac23)D(q,\tfrac23)\eta_1^\frac{q}2\max\left(1,\Esssup_{\frac12<T<1}\int_{\bR^3}F_{N+n_0}(T,V)^qdV\right)^\frac{q}3
\\
\le 2C(q,\tfrac23)D(q,\tfrac23)\eta_1^\frac{q}2&\,.
\ea
$$
Using \eqref{ScalH+} shows that
$$
\ba
\int_0^1H_+(f_{N+n_0+3}(t,\cdot)|2^{-\g})dt=&\int_0^12^{3-\g}H_+(f_{N+n_0+2}(1+2^{-\g}(t-1),\cdot)|1)dt
\\
=&8\int_{1-2^{-\g}}^1H_+(f_{N+n_0+2}(T,\cdot)|1)dT
\\
\le& 16C(q,\tfrac23)D(q,\tfrac23)\eta_1^\frac{q}2\,.
\ea
$$
On the other hand, since $z\mapsto h_+(z):=(z\ln z-z+1)\indc_{z>1}$ is nondecreasing (observe for instance that $h'_+(r)=(\ln r)_+\ge 0$), one has
$$
\ka_1\le\ka_2\implies h_+(\phi/\ka_2)\le h_+(\phi/\ka_1)\implies \tfrac1{\ka_2}H_+(\phi|\ka_2)\le\tfrac1{\ka_1}H_+(\phi|\ka_1)
$$
for all $\phi\equiv\phi(v)$ measurable on $\bR^3$ and such that $\phi(v)\ge 0$ for a.e. $v\in\bR^3$. Since $q\in(\tfrac43,2)$, one has $\g>1$ and therefore
$$
\ba
\int_{1/8}^1H_+(f_{N+n_0+3}(t,\cdot)|\tfrac12)dt\le&\int_0^1H_+(f_{N+n_0+3}(t,\cdot)|\tfrac12)dt
\\
\le&2^{\g-1}\int_0^1H_+(f_{N+n_0+3}(t,\cdot)|2^{-\g})dt
\\
\le&2^{\g+3}C(q,\tfrac23)D(q,\tfrac23)\eta_1^\frac{q}2\,.
\ea
$$
Choosing $0<\eta_1$ small enough so that
$$
\eta_1<\eta_0[c_h,q,C'_E]^\frac2{q}(2^{\g+3}C(q,\tfrac23)D(q,\tfrac23))^\frac2{q}\,,
$$
we conclude that
$$
f_{N+n_0+3}(t,v)\le 2\qquad\text{ for a.e. }(t,v)\in[\tfrac12,1]\times\bR^3\,.
$$

\smallskip
At this point, we return to the original suitable solution $f$. The last inequality is equivalent to
$$
f(T,V)\le 2^{\g(N+n_0+3)+1}\qquad\text{ for a.e. }(t,v)\in[1-2^{-\g(N+n_0+3)-1},1]\times\bR^3\,,
$$
which completes the proof of Proposition \ref{P-ImprovedDG}.


\section{Proof of Theorem \ref{T-PartReg}}\lb{S-PartReg}


By Proposition \ref{P-ExistSSol}, there exist a negligible set $\cN\subset(0,+\infty)$, and, for each Lebesgue exponent $q\in(1,2)$, a constant $C'_E[T,f_{in},q]>0$ such that \eqref{SuitSolIneq} holds for all $t_1,t_2\in[0,T]\setminus\cN$ and 
all $\ka\ge 1$.

If $\tau\in\bS[f,[1,2]]$, consider the function $f_\tau:\,(t,v)\mapsto f(t+\tau-1,v)$. This is a suitable solution to the Landau equation \eqref{LC} on $[0,1]$ for which \eqref{SuitSolIneq} holds for each $q\in(1,2)$ with the constant $C'_E[1,f_{in},q]>0$,
for all $t_1,t_2\in[0,1]$ such that $t_1+\tau-1\notin\cN$ and $t_2+\tau-1\notin\cN$.

Applying Proposition \ref{P-ImprovedDG} to $f_\tau$ shows that, for each $q\in(\tfrac43,2)$, there exists $\eps(\tau)\in(0,\tfrac12)$ such that
$$
\int_{\tau-\eps(\tau)^\g}^\tau\left(\int_{\bR^3}|\grad_v(f(t,v)^{1/q}-\eps(\tau)^{-\g/q})_+|^q\right)^{2/q}dt\ge\tfrac12\eta_1\eps(\tau)^{3-\g}
$$
with $\g=\frac{5q-6}{2q-2}$. Observe that
$$
\grad_v(f(t,v)^{1/q}-\ka^{1/q})_+=\grad_v(f(t,v)^{1/q})\indc_{f(t,v)\ge\ka}\,,
$$
so that
$$
|\grad_v(f(t,v)^{1/q}-\eps(\tau)^{-\g/q})_+|\le|\grad_v(f(t,v)^{1/q}-1)_+|
$$
and therefore
$$
\int_{\tau-\eps(\tau)^\g}^\tau\left(\int_{\bR^3}|\grad_v(f(t,v)^{1/q}-1)_+|^q\right)^{2/q}dt\ge\tfrac12\eta_1\eps(\tau)^{3-\g}\,.
$$

Hence
$$
\bS[f,[1,2]]\subset\bigcup_{\tau\in\bS[f,[1,2]]}(\tau-\eps(\tau)^\g,\tau+\eps(\tau)^\g)\,.
$$
By the Vitali covering theorem (see chapter I, \S 1.6 in \cite{Stein}), there exists a countable subcollection of pairwise disjoint intervals $(\tau_j-\eps(\tau_j)^\g,\tau_j+\eps(\tau_j)^\g)$ such that
$$
\bS[f,[1,2]]\subset\bigcup_{j\ge 1}(\tau_j-5\eps(\tau_j)^\g,\tau_j+5\eps(\tau_j)^\g)\,.
$$
On the other hand
$$
\ba
\sum_{j\ge 1}\tfrac12\eta_1\eps(\tau)^{3-\g}\le&\sum_{j\ge 1}\int_{\tau_j-\eps(\tau_j)^\g}^{\tau_j}\left(\int_{\bR^3}|\grad_v(f(t,v)^{1/q}-1)_+|^q\right)^{2/q}dt
\\
=&\int_{\bigcup_{j\ge 1}(\tau_j-\eps(\tau_j)^\g,\tau_j)}\left(\int_{\bR^3}|\grad_v(f(t,v)^{1/q}-1)_+|^q\right)^{2/q}dt
\\
\le&\int_0^2\left(\int_{\bR^3}|\grad_v(f(t,v)^{1/q}-1)_+|^q\right)^{2/q}dt
\\
\le&\frac1{C'E}\left(H_+(f_{in}|1)+4\int_{\bR^3}f_{in}(v)dv\right)<\infty\,,
\ea
$$
where the equality above follows from the fact that the intervals $(\tau_j-\eps(\tau_j)^\g,\tau_j)$ are pairwise disjoint. Since $\g=\frac{5q-6}{2q-2}$, this proves that
$$
\cH^{\frac{q}{5q-6}}(\bS[f,[1,2]])<\infty\,,\quad\text{ for all }q\in(\tfrac43,2)\,.
$$
Since $\frac{q}{5q-6}$ decreases from $2$ to $\tfrac12$ as $q$ increases from $\tfrac43$ to $2$, we conclude that $\cH^s(\bS[f,[1,2]])<\infty$ for all $s>\tfrac12$, which implies in turn that $\cH^s(\bS[f,[1,2]])=0$ for all $s>\tfrac12$
(see for instance Theorem 2.1.3 in \cite{AmbroTilli}).

For each $m\in\bZ$, set $f_m(t,v):=2^{-m}f(2^{-m}t,v)$; then $f_m$ is a suitable solution to the Landau equation on $[0,T]\times\bR^3$ for each $T>0$, and
$$
\ba
\int_{\bR^3}(1+|v|^k+|\ln f_m(0,v)|)f_m(0,v)dv&
\\
=2^{-m}\int_{\bR^3}(1+|m|\ln 2+|v|^k+|\ln f_{in}(v)|)f_{in}(v)dv&<\infty\quad\text{ for all }k>3\,.
\ea
$$
Hence, for each $m\in\bZ$, one has
$$
\cH^s(\bS[f,[2^{-m},2^{1-m}]])=2^{-sm}\cH^s(\bS[f_m,[1,2]])=0\quad\text{ for all }s>\tfrac12\,.
$$
Therefore
$$
\cH^s(\bS[f,(0+\infty)])=\sum_{m\in\bZ}\cH^s(\bS[f,[2^{-m},2^{1-m}]])=0\quad\text{ for all }s>\tfrac12\,,
$$
which implies that $\bS[f,(0,+\infty)]$ has Hausdorff dimension $\le\tfrac12$ (see Definition 2.1.5 in \cite{AmbroTilli}).

\bigskip
\noindent
{\textbf{Acknowledgements.} We are most grateful to A.V. Bobylev, L. Desvillettes and Y. Martel for several helpful discussions during the preparation of this paper.


\end{document}